\newtheorem{thm}{Theorem}[section]
\newtheorem{lem}[thm]{Lemma}
\newtheorem{cor}[thm]{Corollary}
\theoremstyle{definition}
\newtheorem{definition}[thm]{Definition}
\newtheorem{example}[thm]{Example}
\theoremstyle{remark}
\newtheorem{remark}[thm]{Remark}
\numberwithin{equation}{section}
\newcommand{\Q}{\mathbb{Q}}
\newcommand{\C}{\mathbb{C}}
\begin{document}

%%
%% The title of the paper goes here.  Edit to your title.
%%

\title{Ax-Schanuel Type Theorems on Functional Transcendence via Nevanlinna Theory}

%%
%% Now edit the following to give your name and address:
%% 

\author{Jiaxing Huang}
\address{Department of Mathematics, The University of Hong Kong, 
Pokfulam Road, Hong Kong}
\email{hjxmath@gmail.com}
%\urladdr{www.math.sc.edu/$\sim$howard} % Delete if not wanted.

%%
%% If there is another author uncomment and edit the following.
%%

\author{Tuen-Wai Ng}
\address{Department of Mathematics, The University of Hong Kong, 
Pokfulam Road, Hong Kong}
\email{ntw@maths.hku.hk}
%\address{Department of Mathematics, University of South Carolina,
%Columbia, SC 29208}
%\email{second@math.sc.edu}
%\urladdr{www.math.sc.edu/$\sim$second}

%%
%% If there are three of more authors they are added in the obvious
%% way. 
%%

%%%
%%% The following is for the abstract.  The abstract is optional and
%%% if not used just delete, or comment out, the following.
%%%

\begin{abstract}
We will apply Nevanlinna Theory to prove several Ax-Schanuel type Theorems for functional transcendence when the exponential map is replaced by other meromorphic functions. We also show that analytic dependence will imply algebraic dependence for certain classes of entire functions. Finally, some links to transcendental number theory and geometric Ax-Schanuel Theorem will be discussed.
\end{abstract}

%%
%%  LaTeX will not make the title for the paper unless told to do so.
%%  This is done by uncommenting the following.
%%

 \maketitle

%%
%% LaTeX can automatically make a table of contents.  This is done by
%% uncommenting the following:
%%

\tableofcontents

%%
%%  To enter text is easy.  Just type it.  A blank line starts a new
%%  paragraph. 
%%

\section{Introduction and Main Theorems}
The famous Schanuel Conjecture (first appeared in Lang's book \cite{Lang66}) asserts that, 
\emph{given $n$ complex numbers $\alpha_1, \dots, \alpha_n$ which are $\Q$-linearly independent, there are at least $n$ algebraically independent numbers among the $2n$ numbers $\{\alpha_1, \dots, \alpha_n, e^{\alpha_1}, \dots, e^{\alpha_n}\}$.} While this conjecture is still open even for $n=2$, there is a formal  power series analogue proved by Ax using method in differential algebra in 1971 and is now known as the Ax-Schanuel Theorem. 

\begin{thm}[Ax-Schanuel Theorem \cite{Ax71}]\label{thm:Ax}
Let $f_1, \dots, f_n\in\mathbb{C}[[t_1, \dots, t_m]]$ be power series that are $\mathbb{Q}$-linearly independent modulo $\mathbb{C}$. 
Then we have the following inequality: 
$$\mathrm{tr}.\deg_{\mathbb{C}}\mathbb{C}(f_1, \dots, f_n, e(f_1), \dots, e(f_n))\geq n+\mathrm{rank}\left(\frac{\partial f_i}{\partial t_j}\right)_{1\leq j\leq m, 1\leq i\leq n}$$ where $e(x)=e^{2\pi ix}$ and $\mathrm{tr}.\deg_KL$ is the transcendence degree of a field $L$ over its sub-field $K.$
\end{thm}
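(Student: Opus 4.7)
The plan is to follow Ax's original differential-algebra strategy, working intrinsically in the differential field $K = \mathrm{Frac}(\mathbb{C}[[t_1,\dots,t_m]])$ equipped with the commuting derivations $D_j = \partial/\partial t_j$ whose shared field of constants is exactly $\mathbb{C}$. In this setup the elements $y_i := e(f_i)$ are units of $K$ satisfying the logarithmic-derivative identity $D_j y_i / y_i = 2\pi i\, D_j f_i$, so each pair $(f_i,y_i)$ behaves as a formal log/exp system simultaneously with respect to every $D_j$.

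I would argue by contradiction. Set $r = \mathrm{rank}(D_j f_i)_{j,i}$ and $L = \mathbb{C}(f_1,\dots,f_n,y_1,\dots,y_n)$, and suppose $\mathrm{tr}.\deg_{\mathbb{C}} L < n+r$. The first step is to convert this into a statement about the Kähler differentials $\Omega_{L/\mathbb{C}}$: because $dy_i/y_i = 2\pi i\, df_i$, the $2n$ differentials $df_1,\dots,df_n, dy_1/y_1,\dots,dy_n/y_n$ together span the same $L$-subspace of $\Omega_{L/\mathbb{C}}$ as $df_1,\dots,df_n$ alone. Combining the hypothesis on the transcendence degree with $\mathrm{rank}(D_j f_i) = r$ and the fact that the constants of $K$ are precisely $\mathbb{C}$, a standard dimension count (Ax's key lemma on differential forms) forces a nontrivial $\mathbb{C}$-linear relation $\sum c_i \, df_i = 0$, which integrates to $\sum c_i f_i \in \mathbb{C}$ for some $(c_i) \in \mathbb{C}^n \setminus \{0\}$.

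The main obstacle is to upgrade this $\mathbb{C}$-linear relation modulo $\mathbb{C}$ to the $\mathbb{Q}$-linear one excluded by hypothesis, since this is where the arithmetic nature of $\exp$ must really be used. I would expand the polynomial relation among $\{f_i, y_i\}$ witnessing the transcendence-degree bound as a $\mathbb{C}(f_1,\dots,f_n)$-linear combination of monomials $\prod y_i^{a_i}$, take two distinct monomials with exponent vectors $(a_i)$ and $(b_i)$ that are forced to appear, and apply $D_j \log(\cdot)$ to the ratio of the two sides. The logarithmic-derivative identity converts this into the relation $\sum(a_i - b_i)\, D_j f_i \in \mathbb{C}(f_1,\dots,f_n)$-combinations of $D_j$ of the coefficients, which an inductive reduction on the number of monomials collapses to $\sum(a_i - b_i) f_i \in \mathbb{C}$. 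Since $(a_i - b_i) \in \mathbb{Z}^n \setminus \{0\}$, this is exactly the $\mathbb{Q}$-linear dependence modulo $\mathbb{C}$ excluded by hypothesis, giving the required contradiction.

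I expect this final monomial-separation/descent step to be the technical core of the proof: it is the unique place where the \emph{rationality} hidden in $\exp$ is extracted, and it has no direct analogue when $\exp$ is replaced by a generic meromorphic function—motivating the Nevanlinna-theoretic substitutes developed in the later sections of the paper.
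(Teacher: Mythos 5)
The paper does not prove this statement: Theorem~\ref{thm:Ax} is quoted as background and attributed directly to Ax \cite{Ax71}, so there is no in-paper proof to compare yours against. I will therefore assess your sketch on its own terms against what Ax's argument actually requires.

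Your overall plan (work in the differential field $K = \mathrm{Frac}(\mathbb{C}[[t_1,\dots,t_m]])$ with derivations $D_j = \partial/\partial t_j$ and constants $\mathbb{C}$, pass to K\"ahler differentials, and extract the arithmetic via integer exponents of the $y$-monomials under logarithmic derivative) is indeed the correct differential-algebraic strategy. But the crucial step is not carried: the claim that a ``standard dimension count'' forces a nontrivial relation $\sum c_i\, df_i = 0$ with \emph{constant} coefficients $c_i \in \mathbb{C}$ does not follow from the hypothesis $\mathrm{tr}.\deg_{\mathbb{C}} L < n+r$. Since $dy_i/y_i = 2\pi i\, df_i$, the span of all $2n$ forms in $\Omega_{L/\mathbb{C}}$ is just the $L$-span of $df_1,\dots,df_n$, which has $L$-dimension at most $n$; the inequality $\mathrm{tr}.\deg_{\mathbb{C}} L < n+r$ is therefore perfectly compatible with $df_1,\dots,df_n$ being $L$-linearly (let alone $\mathbb{C}$-linearly) independent, so no constant-coefficient relation is forced by counting alone. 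The content of Ax's key lemma is precisely to overcome this: it is a nontrivial structural statement about closed $1$-forms of type $dy/y$ in a differential field (often proved via algebraic subgroups of $\mathbb{G}_a^n \times \mathbb{G}_m^n$, or via a delicate minimality argument on the variety cut out by the relations), not a routine linear-algebra count. Moreover, the rank $r$ enters through a reduction to a single derivation given by a generic $\mathbb{C}$-linear combination of the $D_j$, a step your sketch names but does not perform.

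The second half of your sketch --- expand the polynomial relation in $y$-monomials, compare two exponent vectors $(a_i)$, $(b_i)$, and apply $D_j \log$ to extract $\sum (a_i - b_i) D_j f_i$ --- is genuinely the place where $\mathbb{Z}$-coefficients appear, and it is the right heuristic for why the hypothesis is $\mathbb{Q}$-linear independence modulo $\mathbb{C}$ rather than $\mathbb{C}$-linear. But as written it is disconnected from the first half (the putative $\mathbb{C}$-linear relation is never used), and the ``inductive reduction on the number of monomials'' that is supposed to collapse $\sum(a_i-b_i) D_j f_i$ all the way to $\sum (a_i - b_i) f_i \in \mathbb{C}$ is exactly the hard technical core; asserting it does not discharge it. You also need to justify that some relation of strictly smaller transcendence degree must genuinely involve at least two distinct $y$-monomials (otherwise there is nothing to separate), which requires first reducing to the case where $f_1,\dots,f_n$ themselves are algebraically independent. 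So: right framework, right heuristic for where the arithmetic comes from, but the load-bearing lemma is replaced by an unsupported dimension count and the descent step is waved at rather than proved.
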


Notice that we always have $$n+1 \le n+\mathrm{rank}\left(\frac{\partial f_i}{\partial t_j}\right)_{1\leq j\leq m, 1\leq i\leq n} \le 2n$$ and hence  
$n+1 \le \mathrm{tr}.\deg_{\mathbb{C}}\mathbb{C}(f_1, \dots, f_n, e(f_1), \dots, e(f_n)) \le 2n$.

In this paper, we will consider what happens if one replaces the exponential map by other meromorphic or entire function $F$ when each $f_i$ is entire in $\mathbb{C}^m$. We will (for the first time) study the algebraic independence among $f_1, \dots, f_n, F(f_1), \dots, F(f_n)$ via Nevanlinna Theory (instead of differential algebra or o-miminality theory) and obtain the following three main theorems on the estimates of the transcendence degree,
$$\mathrm{tr}.\deg_{\mathbb{C}}\mathbb{C}(f_1, \dots, f_n, F(f_1), \dots, F(f_n))$$
under certain growth assumptions on the Nevenalinna characteristic function $T(r, f_i)$ and the proximity function $m(r,f_i/f_1)$ of the $n$ entire functions $f_1, \dots, f_n$. We also provide examples to illustrate the optimality of these theorems.

\begin{thm}\label{thm:rho}Let $f_1, \dots, f_n$ be entire functions in $\mathbb{C}^m$ satisfying $$T(r, f_i)=S(r, f_{i+1}), \quad \mbox{for}\quad 1\leq i\leq n-1.$$ 
Then for any transcendental meromorphic function $F$ in $\mathbb{C}$, we have 
\begin{equation}\label{eqn:n}
\mathrm{tr}.\deg_{\mathbb{C}}\mathbb{C}(f_1, \dots, f_n, F(f_1), \dots, F(f_n))\geq n+1
\end{equation}

Furthermore, if $f_1, \dots, f_n$ are finite order transcendental entire functions in $\mathbb{C}$, then 
\begin{equation}\label{eqn:2n}
\mathrm{tr}.\deg_{\mathbb{C}}\mathbb{C}(f_1, \dots, f_n, F(f_1), \dots, F(f_n))= 2n
\end{equation}
 for any transcendental entire function $F$ in $\mathbb{C}$ with positive order.
\end{thm}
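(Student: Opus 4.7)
The strategy for both parts is proof by contradiction, deriving from a hypothetical algebraic relation an absurdity of the form $T(r, g) \le (1 - 1/d) T(r, g) + S(r, g)$, where $g$ is the function of largest characteristic. For \eqref{eqn:n}, I would first chain the hypothesis to obtain $T(r, f_i) = S(r, f_n)$ for all $i < n$. Any minimal polynomial relation $P(f_1, \dots, f_n) = 0$ of degree $d$ in $f_n$ then rewrites as $A_d f_n^d = -\sum_{k<d} A_k f_n^k$ with $A_k \in \mathbb{C}[f_1, \dots, f_{n-1}]$, and since $T(r, A_k) = S(r, f_n)$ by the hypothesis and the standard sum/product estimates for the characteristic, the first main theorem of Nevanlinna gives $d T(r, f_n) \le (d-1) T(r, f_n) + S(r, f_n)$, a contradiction. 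Thus $f_1, \dots, f_n$ are algebraically independent. To upgrade to transcendence degree at least $n+1$, I would show $F(f_n)$ is transcendental over $\mathbb{C}(f_1, \dots, f_n)$ by the same scheme, now with $g = F(f_n)$: a composition inequality (the multi-variable analogue of the classical result of Clunie/P\'olya) forces $T(r, F(f_n))/T(r, f_n) \to \infty$ for transcendental meromorphic $F$, so the coefficient characteristics are again $S(r, g)$, and the argument closes.

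For the equality \eqref{eqn:2n} in the one-variable finite-order setting, the plan is to extend the dominance chain so that all $2n$ functions form a single increasing sequence in characteristic:
\[
T(r, f_1) = S(r, f_2) = \cdots = S(r, f_n) = S(r, F(f_1)) = \cdots = S(r, F(f_n)).
\]
The algebraic-independence argument above then propagates inductively through the whole chain, killing any algebraic relation among the $2n$ functions. The two new ingredients are: (i) the bridge $T(r, f_n) = S(r, F(f_1))$, which holds because $f_n$ has finite order while $F \circ f_1$ has infinite order; concretely, a P\'olya-type composition lower bound gives $T(r, F(f_1)) \gtrsim T(c M(r/2, f_1), F) \gtrsim M(r/2, f_1)^{\rho_F - \varepsilon}$ (along a suitable sequence of $r$), against the polynomial bound $T(r, f_n) \lesssim r^{\rho(f_n) + \varepsilon}$; and (ii) the successive comparisons $T(r, F(f_k)) = S(r, F(f_{k+1}))$ for $k < n$, which follow from the upper bound $T(r, F(f_k)) \lesssim M(r, f_k)^{\rho_F + \varepsilon}$, the analogous lower bound at index $k+1$, and the growth comparison $\log M(r, f_k) = o(\log M(r, f_{k+1}))$ forced by $T(r, f_k) = S(r, f_{k+1})$ together with the finite-order assumption.

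The main obstacle will be controlling exceptional sets of $r$ and aligning the subsequences that arise from the composition estimates, since $T(R, F) \gtrsim R^{\rho_F - \varepsilon}$ holds only along a sequence rather than for all $R$ when $F$ has unequal order and lower order. The finite-order hypothesis on the $f_i$ keeps $M(r, f_i)$ tame enough that these sequences can be aligned to produce a uniform chain, and the positive-order hypothesis on $F$ converts the qualitative fact that $F \circ f_i$ has infinite order into the required quantitative comparison; once the chain is established, the algebraic-independence conclusion gives $\mathrm{tr}.\deg_{\mathbb{C}} \mathbb{C}(f_1, \dots, f_n, F(f_1), \dots, F(f_n)) = 2n$.
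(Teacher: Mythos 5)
Your treatment of inequality \eqref{eqn:n} is essentially the paper's: you use a Clunie-type composition estimate to make $T(r, f_i)$ and $T(r, F(f_i))$ small compared to $T(r, F(f_n))$, write a hypothetical relation as a polynomial in the dominant function with small-function coefficients, and invoke the characteristic bound for algebraic relations (Lemma~\ref{thm:exg1}) to reach $T(r, g) \le S(r, g)$. The only cosmetic difference is that you first establish algebraic independence of $f_1,\dots,f_n$ and then adjoin $F(f_n)$, whereas the paper kills the relation $P(f_1,\dots,f_n,F(f_n))\equiv 0$ in one inductive sweep; these are logically equivalent.

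For the equality \eqref{eqn:2n}, however, you take a genuinely different and, as sketched, incomplete route. The paper establishes the two new links in the dominance chain by two black-box lemmas: Lemma~\ref{lem:Zheng} (for finite-order $f_i$, $T(\alpha r, f_i)\le C\,T(r,f_i)$ outside a set of finite logarithmic measure) upgrades $T(r,f_i)=S(r,f_{i+1})$ to $T(\alpha r,f_i)=o(T(r,f_{i+1}))$, and then Lemma~\ref{lem:GY} (Goldstein--Yang) converts this \emph{directly} into $T(r,F(f_i))=o(T(r,F(f_{i+1})))$, with the exceptional-set bookkeeping built in. You instead attempt to reprove this comparison from scratch via P\'olya-type maximum-modulus estimates, writing $T(r,F(f_k))\lesssim M(r,f_k)^{\rho_F+\varepsilon}$ against $T(r,F(f_{k+1}))\gtrsim M(r/2,f_{k+1})^{\rho_F-\varepsilon}$. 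The problem you yourself flag is fatal as left: the lower bound $\log M(R,F)\gtrsim R^{\rho_F-\varepsilon}$ holds only along a sequence of $R$ (because $\rho_F$ is a $\limsup$), so the derived lower bound on $T(r,F(f_{k+1}))$ holds only on the preimage under $r\mapsto cM(r/2,f_{k+1})$ of that sequence, which is a different subset of $r$ for each $k$ and need not have finite-measure complement. Your remark that ``the finite-order hypothesis keeps $M(r,f_i)$ tame enough that these sequences can be aligned'' is an assertion, not an argument; making it rigorous would amount to reproving something like Lemma~\ref{lem:GY}. You should either cite Goldstein--Yang and Zheng as the paper does, or supply a genuine alignment argument — e.g.\ by showing the good $r$-sets have full logarithmic density, which does not follow from what you have written.
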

Since $S(r, f)$ has the growth $o(T(r, f))$ as $r\rightarrow\infty$ outside of a possible exceptional set of finite measure, the condition $T(r, f_i)=S(r, f_{i+1})$ means $T(r,f_{i+1})$ grows much faster than $T(r,f_i)$. The next theorem considers a case which implies $f_1,...,f_n$ have comparable $T$-functions.

\begin{thm}\label{thm:L2} Let $ f_1, \dots, f_n$ be non-constant entire functions in $\mathbb{C}^m$ with $m(r, f_i/f_1)=S(r, f_1)$ for each $i=1, 2, \dots, n$. Suppose that  $f_1, \dots, f_n$ are algebraically independent over $\mathbb{C}$. If the deficiency $\delta(0, f_1)>0$, then we have
$$\mathrm{tr}.\deg_{\mathbb{C}}\mathbb{C}(f_1, \dots, f_n, F(f_1), \dots, F(f_n))=2n$$ for any transcendental entire function $F$ in $\mathbb{C}$.
\end{thm}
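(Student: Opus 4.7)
The plan is a proof by contradiction using Nevanlinna-theoretic growth comparisons. Assume $\mathrm{tr}.\deg_{\mathbb{C}}\mathbb{C}(f_1,\dots,f_n,F(f_1),\dots,F(f_n))<2n$. Since $f_1,\dots,f_n$ are algebraically independent over $\mathbb{C}$, this produces a nonzero polynomial relation
$$\sum_\alpha a_\alpha(f_1,\dots,f_n)\,F(f_1)^{\alpha_1}\cdots F(f_n)^{\alpha_n}\equiv 0,$$
with $a_\alpha\in\mathbb{C}[x_1,\dots,x_n]$, which I would choose to have minimal total $y$-degree.

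First I would convert the proximity and deficiency hypotheses into a characteristic comparison. Since each $f_i/f_1$ is meromorphic with poles only at zeros of $f_1$,
$$T(r,f_i/f_1)\le m(r,f_i/f_1)+N(r,1/f_1)\le S(r,f_1)+(1-\delta(0,f_1)+o(1))T(r,f_1),$$
so $T(r,f_i)\asymp T(r,f_1)$ for every $i$, and each coefficient $a_\alpha(f_1,\dots,f_n)$ appearing in the relation satisfies $T(r,a_\alpha)=O(T(r,f_1))$. In particular the error classes $S(r,f_i)$ all coincide, which is the crucial calibration for everything that follows.

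Next, viewing the relation as a minimal polynomial equation in $F(f_n)$ over the field $L_{n-1}:=\mathbb{C}(f_1,\dots,f_n,F(f_1),\dots,F(f_{n-1}))$, the standard Nevanlinna bound for algebraic functions yields
$$T(r,F(f_n))\le C\,T(r,f_1)+C\sum_{i<n}T(r,F(f_i))+O(1).$$
Iterating this elimination inductively—after checking that the sub-collection $f_1,\dots,f_{n-1}$ inherits the proximity and deficiency hypotheses—the goal is to reach a bound $T(r,F(f_j))=O(T(r,f_1))$ for some $j$. This would contradict the classical composition inequality, which for a transcendental entire $F$ and a non-constant entire $f_j$ forces $T(r,F(f_j))/T(r,f_1)\to\infty$ outside a possible exceptional set of finite measure.

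The hard part is making the inductive elimination genuinely close the loop: substituting for $F(f_n)$ only trades it against the $F(f_i)$ with $i<n$, whose characteristics are \emph{a priori} enormous. I expect the real difficulty to be in exploiting the minimality of the polynomial relation together with the strict slack $T(r,f_i/f_1)\le(1-\delta(0,f_1)+o(1))T(r,f_1)$ supplied by the positive deficiency. A natural technical device is to divide the relation by its dominant monomial in $F(f_1)$ and apply a second main theorem (or a Borel-type lemma) to the resulting quotient, using $\delta(0,f_1)>0$ to keep $F(f_1)$ pinned near the constant $F(0)$ on a proximity-substantial set and thereby prevent trivial cancellations among the $F(f_i)$.
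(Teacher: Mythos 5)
Your opening moves are sound: convert $m(r,f_i/f_1)=S(r,f_1)$ plus $\delta(0,f_1)>0$ into $T(r,f_i)\asymp T(r,f_1)$, and set up a hypothetical polynomial relation to contradict. But the approach then stalls precisely where you flag it: the elimination loop does not close, because trading $F(f_n)$ against $F(f_1),\dots,F(f_{n-1})$ never produces the bound $T(r,F(f_j))=O(T(r,f_1))$ that you need to invoke Clunie's Lemma. Your proposed fix --- dividing by a dominant monomial and appealing to a second main theorem or Borel-type lemma, with ``$\delta(0,f_1)>0$ pinning $F(f_1)$ near $F(0)$'' --- is not a workable mechanism; the deficiency hypothesis enters the actual argument in a completely different way, and nothing in a Borel-type lemma will control the characteristics of the $F(f_i)$ here.

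The missing idea is this: since $F$ is an entire function, expand it into its Taylor series and substitute, so that the whole relation $Q(f_1,\dots,f_n,F(f_1),\dots,F(f_n))\equiv 0$ becomes a nonzero \emph{power series} relation $G(f_1,\dots,f_n)\equiv 0$ with constant coefficients --- the transcendental functions $F(f_i)$ disappear from view entirely. Then one applies a Gross--Osgood/Li type reduction (Theorem~\ref{thm:L} and Lemma~\ref{lem:LB} in the paper): a Pad\'e-style counting argument produces polynomials $P$, $Q$ of controlled degree $p$ such that $QG+P$ vanishes to arbitrarily high order $v$. Feeding this into the proximity estimate of Lemma~\ref{lem:ldl} yields $m(r,1/f_1)\leq (p/v)\,T(r,f_1)+S(r,f_1)$ whenever $P(f_1,\dots,f_n)\not\equiv 0$; letting $p/v\to 0$ forces $\delta(0,f_1)=0$, which is exactly what your hypothesis excludes. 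Hence $P(f_1,\dots,f_n)\equiv 0$, a genuine polynomial relation contradicting algebraic independence of $f_1,\dots,f_n$. This is where $\delta(0,f_1)>0$ really does its work --- as a threshold that the Pad\'e reduction cannot cross --- not as a device to pin the values of $F$. Without this ``analytic dependence implies algebraic dependence'' step, the argument does not go through.
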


\begin{remark} The condition $m(r, f_i/f_1)=S(r, f_1)$  holds when $f_i=f_1^{(i)}$ for any non-constant entire $f_1$ in $\mathbb{C}$ (\cite{Hay64, IIpo11, Ru01}) or when $f_i(z)=f_1(z+\eta_i)$ for finite order entire function $f_1$ in $\mathbb{C}$ and non-zero complex number $\eta_i$ for $i\geq 2$ (\cite{CF08, HK06}). In general $m(r, f_1(\eta_iz)/f_1(z))=S(r, f_1)$ is not true, except when $f_1$ is a zero order entire function. However, in such case, $\delta(0, f_1)=0$ because  $\displaystyle\sum_{a\in\hat{\mathbb{C}}}\delta(a, f)\leq 1$ for any zero order meromorphic function $f$.
\end{remark}

For $n=2$ and $m=1$, we can get rid of the growth restrictions on $T(r, f_i)$ or the proximity function $m(r, f_i/f_1)$ and obtain the following

\begin{thm}\label{thm:prime} 
Let $f_1$ and $f_2$ be entire functions in $\mathbb{C}$. Suppose that $f_1$ and $f_2$ satisfy one of the following conditions:
\begin{enumerate}[(1)]
\item $f_1$ and $f_2$ are two polynomials with distinct degrees;
\item $f_1$ is a polynomial and $f_2$ is a transcendental entire function;
\item Both $f_1$ and $f_2$ are transcendental entire functions which are $\mathbb{C}$-linearly independent modulo $\mathbb{C}$ and $f_1$ is prime.  
\end{enumerate}
 Then we have 
$$\mathrm{tr}.\deg_{\mathbb{C}}\mathbb{C}(f_1, f_2, F(f_1), F(f_2))\geq 2+1$$
for any positive order entire function $F$. 
\end{thm}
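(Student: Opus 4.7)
The plan is to handle the three conditions in turn, combining field-theoretic bookkeeping, Nevanlinna growth comparisons, and (for~(3)) the classical theory of entire parametrizations of algebraic curves. Condition~(2) is immediate from Theorem~\ref{thm:rho}: a non-constant polynomial $f_1$ has $T(r,f_1)=O(\log r)$, which is $S(r,f_2)$ for any transcendental entire $f_2$, so the $n=2$ instance of~\eqref{eqn:n} delivers the desired inequality.

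For condition~(1), the extension $\mathbb{C}(z)/\mathbb{C}(f_1,f_2)$ is finite algebraic, so $\mathrm{tr}.\deg\,\mathbb{C}(f_1,f_2)=1$; and the entire transcendental function $F(f_1)$---not being a polynomial in $z$---is transcendental over $\mathbb{C}(z)$ and hence over $\mathbb{C}(f_1,f_2)$, lifting the transcendence degree to~$2$. Supposing $F(f_2)$ were algebraic over $\mathbb{C}(f_1,f_2,F(f_1))$, and therefore over $\mathbb{C}(z,F(f_1))$, the standard characteristic-function inequality for algebraic dependence yields
\[
T(r,F(f_2))\;\leq\;C\bigl(\log r+T(r,F(f_1))\bigr)+O(1).
\]
On the other hand, the P\'olya-type comparison $T(r,F\circ g)\asymp T(M(r,g),F)$ together with $\mathrm{ord}\,F>0$ and $\deg f_2>\deg f_1$ forces $T(r,F(f_2))/T(r,F(f_1))\to\infty$---a contradiction.

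Condition~(3) splits according to whether $f_1,f_2$ are algebraically independent over $\mathbb{C}$. \emph{If they are}, it suffices to show $F(f_1)$ is transcendental over $\mathbb{C}(f_1,f_2)$. Assume to the contrary a nontrivial relation $P(f_1,f_2,F(f_1))\equiv 0$ with $P\in\mathbb{C}[X,Y,W]$, and fix a generic $c$ in the image of $f_1$. Algebraic independence forces $f_2$ to take infinitely many distinct values on the fibre $f_1^{-1}(c)$, yet $F(c)$ is a single value, so the polynomial $P(c,Y,F(c))\in\mathbb{C}[Y]$ must vanish identically. Expanding $P=\sum_i P_i(X,W)Y^i$ we conclude $P_i(c,F(c))=0$ for every $c$ in the image of $f_1$ and every $i$; transcendence of $F$ (whose graph is contained in no algebraic curve) forces each $P_i\equiv 0$, contradicting $P\not\equiv 0$. \emph{If $f_1,f_2$ are algebraically dependent}, the image $(f_1,f_2)(\mathbb{C})$ lies on an irreducible affine algebraic curve $C\subset\mathbb{C}^2$ admitting a non-constant entire parametrization; the classical Picard/Liouville obstruction forces $C$ to have geometric genus~$0$, and a birational parametrization $(\phi,\psi)$ of $C$ yields a meromorphic $h$ on $\mathbb{C}$ with $f_1=\phi\circ h$ and $f_2=\psi\circ h$. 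Primeness of $f_1$ applied to $f_1=\phi\circ h$ rules out that $h$ is linear (which would make $f_2=\psi\circ h$ rational in $z$, contradicting transcendence of $f_2$), and so forces $\phi$ to be a M\"obius transformation, giving $f_2=R(f_1)$ for a rational $R$; the hypothesis that $f_1,f_2$ are $\mathbb{C}$-linearly independent modulo~$\mathbb{C}$ then gives $\deg R\ge 2$, and the same growth comparison as in case~(1)---now applied with $g=R(f_1)$---rules out algebraic dependence of $F(f_2)=F(R(f_1))$ over $\mathbb{C}(f_1,F(f_1))$.

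The main obstacle will be the curve-parametrization step of case~(3b): one must carefully verify that an irreducible algebraic curve admitting a non-constant \emph{entire} parametrization is rational, and then reconcile the paper's precise definition of ``prime'' with the possibility that the birational parametrization $\phi$ has a single pole at the Picard exceptional value of $h$, in order to extract the clean conclusion $f_2=R(f_1)$. The remaining ingredients---the bound $T(r,u)\le C\sum_i T(r,v_i)+O(1)$ for algebraic dependence, and the composition estimate $T(r,F\circ g)\asymp T(M(r,g),F)$ for entire $F$ of positive order---are classical.
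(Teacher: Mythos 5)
Your handling of condition (2) is essentially the paper's, and your argument for condition (1) is a valid alternative to the paper's: the paper invokes Mokhon'ko's result on the equation $R(z, F(f_1), F(f_2)) = 0$ to force $\rho(F) = 0$, whereas you use Lemma \ref{thm:exg1} together with a P\'olya-type comparison $T(r, F\circ g) \asymp T(M(r,g), F)$. Both reach the same contradiction with $\rho(F) > 0$; your route avoids Mokhon'ko's lemma at the cost of importing the composition asymptotics.

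The difficulty is in condition (3), and it is not confined to the parametrization step you flag. Your sub-case 3(a) contains a false intermediate claim: algebraic independence of $f_1, f_2$ does \emph{not} force $f_2$ to take infinitely many distinct values on a fibre $f_1^{-1}(c)$. Take $f_1 = e^z$ and $f_2 = e^{e^z}$: these are algebraically independent over $\mathbb{C}$ (a polynomial relation $R(e^z, e^{e^z}) \equiv 0$ would give $R(w, e^w) \equiv 0$, forcing $R \equiv 0$), yet on the fibre $f_1^{-1}(c) = \{\log c + 2\pi i k\}$ the function $f_2$ is constantly $e^c$. Primeness of $f_1$ is not used anywhere in your 3(a), so it cannot rescue the argument, and without the ``many values on a fibre'' input the rest of 3(a) (vanishing of each $P_i(c, F(c))$) collapses. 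In 3(b) there are further technical problems: the lift $h$ to $\mathbb{P}^1$ need not be entire, so the paper's definition of primeness (which requires the inner function $h$ to be entire) does not directly apply to $f_1 = \phi\circ h$; and the conclusion $\deg R \ge 2$ does not follow from $\mathbb{C}$-linear independence modulo $\mathbb{C}$, since $R$ could be a non-affine M\"obius map with its pole at the Picard-omitted value of $f_1$.

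The paper avoids both difficulties by not splitting on whether $f_1, f_2$ are algebraically independent. It starts from the (assumed) algebraic relation among $f_1, f_2, F(f_1)$, rewrites it as $\sum_i a_i(f_1, F(f_1)) f_2^i \equiv 0$ with entire coefficients $A_i(x) := a_i(x, F(x))$, and applies Theorem \ref{thm:Ng} (the analytic factorization result for $P(f,g)\equiv 0$) to produce a transcendental \emph{entire} $h$ with $f_1 = g_1 \circ h$, $f_2 = g_2 \circ h$. Primeness of $f_1$ then legitimately forces $g_1$ linear; $\mathbb{C}$-linear independence modulo $\mathbb{C}$ rules out $g_2$ linear; $g_2$ transcendental is handled by Clunie, and $g_2$ polynomial of degree $\ge 2$ by Mokhon'ko applied to the relation $Q(g_1, F(g_1), F(g_2)) \equiv 0$. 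If you want to salvage your approach, you would need to replace the fibre argument in 3(a) with Theorem \ref{thm:Ng} (or an equivalent), which in fact makes the split between 3(a) and 3(b) unnecessary.
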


\begin{definition}
Let $f$ be a meromorphic function in $\mathbb{C}$,  $f$ is called \emph{prime} if every factorization (in the sense of composition) of the form $f(z)=f_1\circ h(z)$, where $f_1$ is meromorphic and $h$ is entire, implies that either $f_1$ is bilinear or $h$ is linear.  
\end{definition}

Notice that examples of prime entire functions are polynomials of degrees, $e^z+z, ze^z, \sin z e^{\cos z}$, etc (see \cite{CCY90} for more examples). Actually, there are plenty of prime functions as Y. Noda \cite{No81} proved that for any transcendental entire function $f$, $f+\alpha z$ is prime for  all $\alpha\in\mathbb{C}$ except for some countable set $E_f$.

 Applying Theorem \ref{thm:prime}(3), we will give, in Section \ref{sec:geo}, a counter-example to the analogue of a geometric version of Ax-Schanuel Theorem when the exponential map is replaced by other transcendental entire functions. The example illustrates that the validity of a geometric Ax-Schanuel Theorem relies not only on the transcendence of the exponential function, but also on the fact that the exponential function is a uniformization map from $\C$ to $\C^*$.

The rest of this paper is organized as follows. In Section \ref{sec:pre}, we give some definitions in algebra and some results in Nevanlinna theory that we need in the proof of our main results. In Section \ref{sec:main},  we not only proved the main results, but also gave some counter examples to illustrate the necessaries of the assumptions of these results. Finally, links to transcendental number theory and geometric interpretation for Ax-Schanuel Theorem will be discussed in Section \ref{sec:num}. In particular, we will give an example to disprove the validity of a general geometric Ax-Schanuel type inequality. 

\section{Preliminaries}\label{sec:pre}
The main goal of this section is to recall some basic algebraic notions and introduce the concepts and some useful results in Nevanlinna Theory.
\subsection{Nevanlinna Theory}
Let $f$ be a meromorphic function on $\C^m$ and we assume that the reader
is familiar with the following symbols of frequent use in Nevanlinna's theory (see M. Ru \cite{Ru01}):
$$\log^+,\ m(r,f), \ m(r, a, f);\ N(r, f),\ N(r, a, f);\ T(r, f),\ T(r, a, f); \delta(a, f).$$

In certain circumstances of applications of Nevanlinna theory,  we often encounter the quantities which are of growth $o(T(r))$ as $r\rightarrow\infty$ outside of a possible exceptional set of finite linear measure, where $T(r)$ is a continuous, increasing non-negative unbounded function of $r\in\mathbb{R}^+$. Such quantities will be denoted by $S(r)$. In particular, if $T(r)=T(r, f)$, we denote $S(r)$ by $S(r, f)$.

First, we will give some lemmata we need in the proof of our theorems.

 \begin{lem}\label{thm:exg1}
 Let $f$ and $a_j,\ 0\leq j\leq p$, be meromorphic functions on $\mathbb{C}^m$ such that $$\sum_{j=0}^pa_jf^j\equiv 0$$ on $\mathbb{C}^m$. Then $$T(r, f)\leq \sum_{j=0}^p T(r, a_j)+O(1).$$
 \end{lem}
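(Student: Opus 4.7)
I would treat $f$ as algebraic over the meromorphic field generated by $a_0,\ldots,a_p$ and bound $T(r,f)=m(r,f)+N(r,f)$ term by term using the polynomial identity. Assume $a_p\not\equiv 0$ (otherwise delete the trivial top term). The starting point is the rearrangement $a_p f^p = -\sum_{j<p} a_j f^j$, which at any point $z$ with $|f(z)|\geq 1$ yields the pointwise estimate
\[
  |f(z)| \;\leq\; p \cdot \max_{0\leq j<p}\frac{|a_j(z)|}{|a_p(z)|}.
\]
Taking $\log^+$ and integrating over the Nevanlinna sphere $S(r)\subset \mathbb{C}^m$, together with $\log^+\max\leq \sum \log^+$ and the first main theorem $T(r, 1/a_p) = T(r,a_p) + O(1)$, this produces $m(r,f) \leq \sum_{j=0}^{p} T(r, a_j) + O(1)$.

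Next, for the counting function, I would argue that the poles of $f$ are controlled by those of the $a_j$'s. If $f$ has a pole of order $k\geq 1$ at a point $z_0$, then the dominant term $a_p f^p$ in $\sum a_j f^j=0$ must be matched in order by some $a_j f^j$ with $j<p$; comparing orders forces $k(p-j) \leq \mathrm{ord}_{z_0}(a_p) - \mathrm{ord}_{z_0}(a_j)$, so the pole divisor of $f$ is dominated (pointwise, up to a factor depending only on $p$) by that of the collection $\{a_j/a_p\}_{j<p}$. Integrating then gives $N(r,f) \leq \sum_{j<p} N(r, a_j/a_p) + O(1) \leq \sum_{j=0}^p T(r, a_j) + O(1)$, and adding the two estimates yields the claim.

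The only subtlety I see is that the statement lives in $\mathbb{C}^m$: one has to use the several-variable versions of $T$, $m$, $N$ (integrals over the spheres $S(r)\subset\mathbb{C}^m$) and interpret ``order at a point'' via divisors/Lelong numbers rather than as a single integer. Once the correct definitions are in place, however, the proof is word-for-word the classical one-variable computation, so no real obstacle arises; the constants $p$ absorbed into the $O(1)$ are harmless since $p$ is fixed.
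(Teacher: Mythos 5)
Your proposal is essentially the standard argument that the paper is pointing to when it cites Theorem~A1.1.6 of Ru: normalize by the leading coefficient, get a pointwise bound on $|f|$ to control $m(r,f)$, and compare orders at poles to control $N(r,f)$. Both halves are the right idea, and the several-variables remark at the end is correct. Two small points of imprecision, though, are worth flagging.

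First, in the $N$-estimate you justify $k(p-j)\leq\operatorname{ord}_{z_0}(a_p)-\operatorname{ord}_{z_0}(a_j)$ by saying the ``dominant term $a_pf^p$'' must be matched. But $a_pf^p$ need not be dominant at $z_0$ — if $a_p$ has a high-order zero there, $a_pf^p$ can have large positive order. The inequality is still true: if $a_pf^p$ is lowest order it must be cancelled by some $a_jf^j$ of equal order, and if not, some $a_jf^j$ with $j<p$ already has strictly smaller order than $a_pf^p$; either way $\operatorname{ord}_{z_0}(a_jf^j)\leq\operatorname{ord}_{z_0}(a_pf^p)$ for some $j<p$, which is your inequality. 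A cleaner route is to write $a_p=-\sum_{j<p}a_j(1/f)^{p-j}$ and compare orders directly. Second, the chain $\sum_{j<p}N(r,a_j/a_p)\leq\sum_{j=0}^pT(r,a_j)+O(1)$ as written is too optimistic: each term $N(r,a_j/a_p)\leq T(r,a_j)+T(r,a_p)+O(1)$, so summing gives $\sum_{j<p}T(r,a_j)+pT(r,a_p)+O(1)$, i.e.\ a factor $p$ on $T(r,a_p)$; the same issue arises in the $m$-estimate if $\log^+\max\leq\sum\log^+$ is applied to $|a_j/a_p|$ before factoring out $\log^+|1/a_p|$. To land exactly on the lemma's stated constant one should observe that the pole divisor of $f$ is pointwise bounded by (zero divisor of $a_p$) $+\max_{j<p}$(pole divisor of $a_j$), so that $N(r,f)\leq N(r,1/a_p)+\sum_{j<p}N(r,a_j)$, and similarly pull $\log^+|1/a_p|$ out of the maximum in the $m$-estimate. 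That said, the extra factor of $p$ is harmless for every use the paper makes of this lemma (all applications only need $T(r,f)=O\bigl(\sum_j T(r,a_j)\bigr)$), so your argument is operationally adequate even as written.
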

 \begin{proof}
 Following the same argument of Theorem A1.1.6 in M. Ru \cite{Ru01} and the definition of $T$ function,  we can easily obtain the result.
 \end{proof}
 
Now, we present a result on the growth of composite functions first proved by Clunie \cite{Clunie70} and then extended by Chang-Li-Yang \cite{CLY95} to several complex variables.
 
\begin{lem}[Clunie's Lemma \cite{CLY95,Clunie70}]\label{lem:Cl}
Let $f$ be a transcendental entire function on $\mathbb{C}^m$ and let $g$ be a transcendental meromorphic function in the complex plane, then $$T(r, f)=o(T(r, g\circ f))\quad as\quad r\rightarrow\infty$$ and if $g$ is  entire, then $$T(r, g)=o(T(r, g\circ f))\quad as\quad r\rightarrow\infty.$$
\end{lem}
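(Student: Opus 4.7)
The plan is to deduce both assertions from Nevanlinna's First and Second Main Theorems applied to $f$ on $\mathbb{C}^m$, mirroring Clunie's original one-variable argument and its multi-dimensional extension by Chang-Li-Yang.

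For the first assertion, I would fix an arbitrary positive integer $k$ and exploit the transcendence of $g$: as a transcendental meromorphic function on $\mathbb{C}$, $g$ has at most two Picard exceptional values, so one can choose a non-exceptional $a\in\mathbb{C}$ together with $k$ distinct preimages $c_1,\dots,c_k\in g^{-1}(a)$. The $a$-points of $g\circ f$ are exactly the disjoint union $\bigsqcup_{i=1}^{k} f^{-1}(c_i)$, which yields the counting-function inequality
\[
N(r, a, g\circ f) \;\geq\; \sum_{i=1}^{k} N(r, c_i, f).
\]
Applying the Second Main Theorem to the entire function $f$ on $\mathbb{C}^m$ with targets $c_1, \dots, c_k$ gives $(k-2)\,T(r, f) \leq \sum_{i=1}^{k} N(r, c_i, f) + S(r, f)$, and combining this with the First Main Theorem bound $T(r, g\circ f) \geq N(r, a, g\circ f) - O(1)$ produces
\[
T(r, g\circ f) \;\geq\; (k-2)\,T(r, f) - S(r, f).
\]
Since $k$ is arbitrary, this forces $T(r, f)/T(r, g\circ f) \to 0$ outside the usual exceptional set, which is precisely $T(r, f) = o(T(r, g\circ f))$.

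For the second assertion, in which $g$ is also entire, I would instead rely on a P\'olya-type maximum-modulus inequality. After restricting $f$ to a generic complex line on which it remains a transcendental entire function of one variable, the classical estimate $M(r, g\circ f) \geq M\bigl(\tfrac{1}{8}\,M(r/2, f),\, g\bigr)$ holds for $r$ large. The standard two-sided comparisons $T(r, h) \leq \log^+ M(r, h) \leq 3\,T(2r, h)$, valid for any entire $h$, then yield $T(r, g\circ f) \geq c\, T\bigl(\tfrac{1}{16}\,M(r/4, f),\, g\bigr)$ for an absolute constant $c > 0$ and $r$ large. Since $f$ is transcendental entire, $\log M(r/4, f)/\log r \to \infty$; combined with the convexity of $t \mapsto T(e^{t}, g)$, this forces $T\bigl(M(r/4, f)/16,\, g\bigr)/T(r, g) \to \infty$, and hence $T(r, g) = o(T(r, g\circ f))$.

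The principal technical obstacle is the passage from one to several complex variables. The Second Main Theorem on $\mathbb{C}^m$ is available in Stoll's formulation but comes with a ramification-type correction, and one must check that taking $k$ sufficiently large absorbs it. The P\'olya inequality, in turn, requires a slicing argument along a generic complex line on which $f$ is still transcendental entire with comparable characteristic growth; ensuring both properties survive restriction is the technical heart of the proof, and is precisely what \cite{CLY95} carries out in detail.
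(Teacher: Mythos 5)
The paper states Clunie's Lemma with citations to \cite{Clunie70} and \cite{CLY95} and supplies no proof of its own, so there is no internal argument to compare your outline against; what you have written is a faithful reconstruction of the route taken in those references. For the first estimate, choosing $k$ distinct finite preimages $c_1,\dots,c_k$ of a non-Picard-exceptional value $a$ of $g$, noting $N(r,a,g\circ f)\ge\sum_{i}N(r,c_i,f)$, and feeding the $c_i$ into the Second Main Theorem for $f$ on $\mathbb{C}^m$ (Vitter/Stoll) to get $T(r,g\circ f)\gtrsim(k-2)\,T(r,f)$, then letting $k\to\infty$, is exactly Clunie's argument. For the second, the P\'olya composition inequality plus the two-sided comparison $T(r,h)\le\log^+M(r,h)\le 3T(2r,h)$ reduces the claim to $T\bigl(cM(r/4,f),g\bigr)/T(r,g)\to\infty$; your convexity remark is the right mechanism, since convexity of $t\mapsto T(e^t,g)$ gives $T(r^A,g)\ge (A-o(1))T(r,g)$ for each fixed $A$, and $M(r/4,f)$ eventually dominates any $r^A$ because $f$ is transcendental. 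You also correctly isolate what the Chang--Li--Yang extension actually does: a several-variable SMT and a generic-line slicing lemma that makes the one-variable $T(r,g)$ comparable to the several-variable $T(r,g\circ f)$.

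One small caveat: your closing sentence for the first assertion identifies ``$o(\cdot)$ outside a finite-measure exceptional set'' with ``$o(\cdot)$''. Strictly, the lemma as stated has no exceptional set, and removing the one inherited from the SMT error term requires either a short monotonicity argument or Clunie's own slightly sharper bookkeeping; in this paper the distinction is harmless because the lemma is only ever used in the form $T(r,f)=S(r,g\circ f)$, where the exceptional set is built into the $S(r,\cdot)$ convention, but it is worth a sentence if you want the statement exactly as printed.
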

 
Based on Nevanlinna theory, we have the following generalization of Borel's Theorem.

 \begin{lem}[\cite{BCL93, HLY03}]\label{thm:EBL}
  Let $g_j, 0\leq j\leq n$ be entire functions on $\mathbb{C}^m$ such that $g_j-g_k$ are not constants for $0\leq j<k\leq n$ and  $$\sum_{j=0}^na_je^{g_j}\equiv 0$$ where $a_j$'s are meromorphic functions on $\mathbb{C}^m$  such that $T(r, a_j)=o(T(r))$ for $j=0, 1, \dots, n$, hold outside a set with finite measures, and where $$T(r)=\displaystyle\min_{0\leq j<k\leq n}\{T(r, e^{g_j-g_k})\}.$$Then $$a_j\equiv 0,\quad j=0, \dots, n.$$
  \end{lem}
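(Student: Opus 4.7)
The plan is to prove Lemma \ref{thm:EBL} by induction on $n$. The base case $n=0$ is immediate: $a_0 e^{g_0}\equiv 0$ together with the fact that $e^{g_0}$ is zero-free forces $a_0\equiv 0$. For the inductive step I would assume the result for exponential sums with $n$ terms and consider $\sum_{j=0}^n a_j e^{g_j}\equiv 0$. Suppose toward a contradiction that some $a_j\not\equiv 0$; relabelling, take $a_n\not\equiv 0$. Divide the identity by $a_n e^{g_n}$ (legitimate because $g_n$ is entire, so $e^{g_n}$ is zero-free) to obtain
$$\sum_{j=0}^{n-1} c_j\, e^{h_j}\equiv -1,\qquad c_j:=a_j/a_n,\ h_j:=g_j-g_n.$$

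Next I would pick a direction $v\in\mathbb{C}^m$ along which enough of the non-constant entire functions $h_j$ have non-trivial directional derivative (such $v$ exists because every $g_j-g_n$ is non-constant). Applying $\partial_v$ annihilates the constant $-1$ and yields
$$\sum_{j=0}^{n-1}\bigl(\partial_v c_j+c_j\,\partial_v h_j\bigr)\, e^{h_j}\equiv 0.$$
Since the exponent differences $h_j-h_k=g_j-g_k$ are still non-constant, this relation is a candidate for the induction hypothesis with one fewer term. The induction then forces each new coefficient to vanish, and integrating $\partial_v c_j+c_j\,\partial_v h_j\equiv 0$ along $v$, combined over a basis of directions in $\mathbb{C}^m$, gives $c_j e^{h_j}=C_j$ for an honest constant $C_j$. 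Equivalently $a_j=C_j\, a_n\, e^{g_n-g_j}$, so if any $C_j\neq 0$, Lemma \ref{thm:exg1} (or a direct computation) yields $T(r,e^{g_n-g_j})\leq T(r,a_j)+T(r,a_n)+O(1)=o(T(r))$, contradicting $T(r)\leq T(r,e^{g_n-g_j})$ and $T(r)\to\infty$. Hence every $C_j=0$, so $a_j\equiv 0$ for $j<n$, after which the original identity forces $a_n\equiv 0$---a contradiction completing the induction.

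The main obstacle is verifying that the new coefficients $b_j:=\partial_v c_j+c_j\,\partial_v h_j$ meet the smallness hypothesis required by the induction, namely $T(r,b_j)=o(\widetilde T(r))$ with $\widetilde T(r):=\min_{0\le j<k<n}T(r,e^{g_j-g_k})$. The required estimates are $T(r,c_j)\leq T(r,a_j)+T(r,a_n)+O(1)=o(T(r))$, a standard growth bound of the form $T(r,\partial_v c_j)=O(T(r,c_j))+S(r,c_j)$, and the logarithmic derivative lemma $m(r,\partial_v h_j)=m(r,\partial_v(e^{h_j})/e^{h_j})=S(r,e^{h_j})$ (applicable since $h_j$ is entire). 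Careful bookkeeping of the exceptional set of finite measure---and, in the several-variable case, restriction to a complex line on which all $g_j-g_k$ remain non-constant, so as to fall back on the one-variable framework behind Lemma \ref{lem:Cl} and \cite{CLY95}---is the technical heart of the argument, after which the clean algebraic induction above carries everything through.
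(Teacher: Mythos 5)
Your induction-by-differentiation scheme is natural, but there is a genuine gap precisely in the step you flag as ``the main obstacle,'' and the three estimates you list do not close it. To invoke the induction hypothesis you must verify $T(r,b_j)=o(\widetilde T(r))$ for the new coefficients $b_j=\partial_v c_j+c_j\,\partial_v h_j$, with $\widetilde T(r)=\min_{0\le j<k\le n-1}T(r,e^{g_j-g_k})$. The logarithmic derivative lemma indeed gives $m(r,\partial_v h_j)=S(r,e^{h_j})=o\bigl(T(r,e^{h_j})\bigr)$, but $T(r,e^{h_j})=T(r,e^{g_j-g_n})$ is the characteristic of an exponential whose exponent involves the discarded index $n$, and there is no a priori relation forcing it to be comparable to $\widetilde T(r)$; it can be arbitrarily larger. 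Thus $o(T(r,e^{h_j}))$ need not be $o(\widetilde T(r))$, and the induction step cannot be made.

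A concrete obstruction already occurs in one variable, so it is not a bookkeeping issue about restricting to a generic line. Take $m=1$, $n=2$, $g_0=0$, $g_1=z$, $g_2=e^{e^z}$. Then $T(r)=T(r,e^z)\sim r/\pi$, so the hypothesis only requires $T(r,a_j)=o(r)$. Dividing by $a_2e^{g_2}$ produces $h_0=-e^{e^z}$ and $h_1=z-e^{e^z}$, and the new threshold is $\widetilde T(r)=T(r,e^{h_0-h_1})=T(r,e^{-z})\sim r/\pi$. However $\partial_z h_0=-e^{z}e^{e^z}=-e^{z+e^z}$, and $T(r,e^{z+e^z})$ grows faster than any power of $r$ (it is of infinite order), so the differentiated coefficient $c_0\,\partial_z h_0$ is not a small function relative to $\widetilde T(r)$. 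The induction hypothesis therefore does not apply, and the argument breaks down.

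The lemma is of course true, but the standard proofs in the literature do not proceed by naive differentiation and induction on the number of terms. They instead go through second-main-theorem type arguments --- Cartan's theorem for linearly nondegenerate holomorphic curves, or a Wronskian reduction showing that the curve $[a_0e^{g_0}:\cdots:a_ne^{g_n}]$ must be linearly degenerate, combined with a defect/deficiency relation --- which avoid having to estimate the derivatives $\partial_v(g_j-g_n)$ in terms of the min-characteristic. If you want to salvage a differentiation argument, you must find a way to control the derivative terms by quantities that are genuinely dominated by $\widetilde T(r)$, and as the example shows, the raw logarithmic derivative lemma does not deliver that.
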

  
The lower order $\lambda(f)$ and order $\rho(f)$ of $f$ are defined as follows:
$$\lambda(f):=\liminf_{r\rightarrow\infty}\frac{\log T(r, f)}{\log r}, \quad \rho(f):= \limsup_{r\rightarrow\infty}\frac{\log T(r, f)}{\log r}.$$

The following lemmata on the growth of meromorphic functions will play important roles in proving our main theorems.

\begin{lem}[\cite{GY72}]\label{lem:GY}Suppose that $f$ and $g$ are entire functions such that $$T(\alpha r, g)=o(T(r, f))\quad \mbox{as}\quad r\rightarrow\infty$$ for some constant $\alpha>1$. Then for any non-constant entire function $F$, $$T(r, F(g))=o(T(r, F(f)))\quad \mbox{as}\quad r\rightarrow\infty.$$
\end{lem}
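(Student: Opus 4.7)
The plan splits naturally according to whether $F$ is a polynomial or a transcendental entire function. If $F$ is a polynomial of degree $d\geq 1$, the classical identity $T(r, F\circ h)=d\, T(r, h)+O(1)$ reduces the claim to $T(r, g)=o(T(r, f))$, which is immediate since $T(r, g)\leq T(\alpha r, g)=o(T(r, f))$ by hypothesis. So I concentrate on the harder case where $F$ is transcendental entire.

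Here I would combine P\'olya's classical composition inequality
\[
M(r, F\circ f)\;\geq\; M\!\left(c\, M(r/2, f),\, F\right)
\]
(valid for all sufficiently large $r$, with $c>0$ a constant absorbing $|f(0)|$), together with the elementary upper bound $T(r, F\circ g)\leq \log M(r, F\circ g)\leq \log M(M(r, g), F)$. Converting between $T$ and $\log M$ via the standard estimate $T(R, h)\geq \tfrac{R-r}{R+r}\log M(r, h)$ (applied with $R=2r$) yields
\[
T(r, F\circ f)\;\geq\; \tfrac{1}{3}\,T\!\left(c\, M(r/4, f),\, F\right)\qquad\text{and}\qquad T(r, F\circ g)\;\leq\; 3\,T\!\left(2M(r, g),\, F\right).
\]
Since $\log^+|F|$ is subharmonic, $\phi(t):=T(e^t, F)$ is convex and increasing in $t$, and $\phi(t)\to\infty$ because $F$ is transcendental. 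An elementary convexity argument then gives $\phi(t_1)/\phi(t_2)\to 0$ whenever $0<t_1\leq t_2$ with $t_1/t_2\to 0$ and $t_2\to\infty$, reducing the lemma to the asymptotic comparison
\[
\frac{\log M(r, g)}{\log M(r/4, f)}\longrightarrow 0\quad\text{as } r\to\infty.
\]

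To obtain this I would apply the hypothesis through $\log M(r, g)\leq \tfrac{\alpha+1}{\alpha-1}T(\alpha r, g)=o(T(r, f))\leq o(\log M(r, f))$ and then use the convexity of $\log M(r, f)$ in $\log r$ (Hadamard's three-circles theorem) to transfer the comparison from $\log M(r, f)$ to $\log M(r/4, f)$. I expect this last step---converting $o(\log M(r, f))$ into $o(\log M(r/4, f))$---to be the main technical obstacle: it is transparent when $f$ has finite positive order, in which case $\log M(r, f)$ and $\log M(r/4, f)$ differ only by a bounded factor, but for $f$ of order zero or of very irregular growth one must exploit more fully the flexibility of choosing $\alpha>1$ in the hypothesis (replacing $\alpha$ by a larger value produces slack that can be cashed in against the growth of $\log M(\cdot, f)$).
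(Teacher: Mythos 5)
The paper does not prove this lemma; it cites it directly from \cite{GY72} and uses it as a black box, so there is no in-paper proof to compare against.

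Your strategy (split polynomial/transcendental; in the transcendental case combine P\'olya's composition inequality with Borel--Carath\'eodory conversions and then exploit convexity of $\phi(t)=T(e^t,F)$) is the natural one, and the pieces you use are sound: the polynomial case is immediate, the inequality $T(R,h)\geq\frac{R-r}{R+r}\log M(r,h)$ is correct, and the convexity step is fine --- for a convex increasing $\phi$ with $\phi(t_2)\to\infty$, $\phi(t_1)\leq \tfrac{t_1}{t_2}\phi(t_2)+(1-\tfrac{t_1}{t_2})\phi(0)$ does give $\phi(t_1)/\phi(t_2)\to 0$ whenever $t_1/t_2\to 0$.

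However, the gap you flag at the end is a real one, not a technicality, and for small $\alpha$ the argument as written does not close. The hypothesis $T(\alpha r,g)=o(T(r,f))$, via Borel--Carath\'eodory applied at the right radii, only yields $\log M(r,g)=o(\log M(\delta r,f))$ when $\alpha\delta>1$, i.e.\ $\delta>1/\alpha$. Your chain of inequalities, after P\'olya with the $r/2$ factor and the conversion at $R=2r$, demands $\log M(r,g)=o(\log M(r/4,f))$, which the hypothesis supplies only when $\alpha>4$; even replacing $R=2r$ by $R=(1+\epsilon)r$ leaves the P\'olya factor $r/2$ in place and still forces $\alpha>2$. For $1<\alpha\leq 2$ nothing in your argument bridges this, and the obstruction is genuine: when $f$ has infinite order the ratio $\log M(r/4,f)/\log M(r,f)$ can tend to $0$ (e.g.\ $\log M(r,f)\sim e^r$), so $\log M(r,g)=o(\log M(r,f))$ is strictly weaker than what you need, and the convexity of $\log M$ in $\log r$ (three-circles) does not by itself recover the factor of $4$. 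To make this route work for all $\alpha>1$ you would need a sharper form of P\'olya's inequality of the type $M(r,F\circ f)\geq M(cM(\sigma r,f),F)$ with $\sigma$ arbitrarily close to $1$; such refinements exist in the literature but come with an exceptional $r$-set, which then has to be reconciled with the ``as $r\to\infty$'' in the statement. As it stands, the proposal proves the lemma only under the additional hypothesis $\alpha>4$ (or $\alpha>2$ with the tighter conversion), so it does not yet establish the result in full generality.
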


\begin{lem}[\cite{Zheng}]\label{lem:Zheng} Let $f$ be a meromorphic function with finite order. Given two real numbers $C_1$ and $C_2$ greater than 1, then $$T(C_1r, f)\leq C_2T(r, f)$$ holds outside a set $E$ with finite logarithmic measure.
\end{lem}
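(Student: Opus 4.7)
The plan is to reformulate the problem in logarithmic variables and apply a Borel--Vitali style covering argument. Set $V(s) := \log T(e^s, f)$; then $V$ is non-decreasing, and the finite-order hypothesis $\rho := \rho(f) < \infty$ translates to $V(s) \leq (\rho + \epsilon) s + O(1)$ for every $\epsilon > 0$. Under $r \mapsto \log r$, logarithmic measure on $(0, \infty)$ pushes forward to Lebesgue measure on $\mathbb{R}$, so it suffices to show the translated exceptional set $E' := \{s : V(s + a) - V(s) > b\}$ has finite Lebesgue measure, where $a := \log C_1$ and $b := \log C_2$.

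The first key step is an averaging estimate. The identity
$$\int_0^R \bigl[V(s+a) - V(s)\bigr]\,ds \;=\; \int_R^{R+a} V(s)\,ds \;-\; \int_0^a V(s)\,ds \;\leq\; a\, V(R + a),$$
combined with the pointwise bound $V(s+a) - V(s) > b$ on $E'$ and the finite-order ceiling on $V(R+a)$, yields $|E' \cap [0, R]| \leq (\log C_1 / \log C_2)(\rho + \epsilon) R + O(1)$. This is already informative: once $\log C_2 > \rho \log C_1$, the upper density of $E'$ is strictly less than $1$.

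To upgrade this density estimate into the claimed finiteness of total measure, I would apply a Vitali-type extraction to the family $\{[s, s+a] : s \in E'\}$, producing a countable pairwise disjoint subfamily $[s_1, s_1+a], [s_2, s_2+a], \ldots$ with $s_1 < s_2 < \cdots$ whose triple enlargement covers $E'$. Chaining $V(s_{k+1}) \geq V(s_k + a) > V(s_k) + b$ via monotonicity gives $V(s_k) \geq V(s_1) + (k-1) b$, which combined with the finite-order bound forces the linear lower bound $s_k \geq (k-1) b/(\rho + \epsilon) - O(1)$. This caps the number of disjoint Vitali intervals lying in any finite window $[0, R]$.

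The main obstacle is precisely this upgrade step: the elementary arguments above only yield a bound linear in $R$ (a density estimate), whereas the conclusion demands finite total measure. A successful bootstrap requires iterating the averaging on geometric scales and summing the resulting mass estimates, and implicitly uses the quantitative relation $\log C_2 > \rho(f) \log C_1$, i.e., $C_2 > C_1^{\rho(f)}$. The necessity of such a relation is already visible from $f(z) = e^z$: here $T(r, f) = r/\pi$, so for $C_2 < C_1$ the inequality $T(C_1 r, f) \leq C_2 T(r, f)$ fails on all of $(0, \infty)$, showing that some compatibility between the constants and the order of $f$ beyond the bare assumption $C_1, C_2 > 1$ is needed for the lemma to hold.
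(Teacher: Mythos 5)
Your conclusion is the right one, and it is more important than the failed proof attempt: the lemma is false as stated, and the $e^z$ example you give is a genuine counterexample. To be clear about what there is to compare against, the paper itself supplies no argument here --- it simply refers to Lemma 1.1.8 of Zheng \cite{Zheng} --- so there is no internal proof to measure your attempt against. But since $T(r,e^z)=r/\pi$, one has $T(C_1 r,f)=C_1\,T(r,f)$ identically, so for any $1<C_2<C_1$ the inequality $T(C_1 r,f)\le C_2 T(r,f)$ fails for every $r>0$, and $(0,\infty)$ has infinite logarithmic measure. Hence $C_1$ and $C_2$ cannot be two arbitrary reals greater than $1$ independent of each other and of $f$; any correct statement must couple $C_2$ to $C_1$ and to $\rho(f)$. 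Your diagnosis of why the Borel--Vitali machinery stalls is also accurate: both the averaging identity and the disjoint-interval chaining $V(s_{k+1})>V(s_k)+b$ bound the exceptional mass in $[0,R]$ only linearly in $R$. That is a density estimate, not a finiteness statement, and a set of upper density strictly between $0$ and $1$ can perfectly well have infinite total measure, so the ``upgrade step'' you flag is a genuine obstruction rather than a detail to be filled in.

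For the paper's purposes the damage is limited, because where the lemma is actually invoked --- to obtain $T(\alpha r,f_i)=S(r,f_{i+1})$ in the proof of equality (\ref{eqn:2n}) of Theorem \ref{thm:rho} --- the authors explicitly only ask for ``some constants $\alpha>1$ and $C>1$'', i.e.\ the existential form, which is then passed to Lemma \ref{lem:GY}. So the defect is a misquotation of Zheng's Lemma 1.1.8 rather than a flaw in the overall strategy, but as printed the lemma is not provable; the hypothesis must be corrected to make the required compatibility between $C_1$, $C_2$ and the order of $f$ explicit (or the universal quantifier over $C_2$ replaced by an existential one) before any proof, including the one you sketch, can close.
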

The proof of Lemma \ref{lem:Zheng} can be found in Zheng \cite{Zheng} (Lemma 1.1.8). 

\begin{lem}[Edrei and Fuchs \cite{EF64}]\label{lem:EF} Let $f$ be a meromorphic function that is not of zero order and $g$ be a transcendental entire function. Then $f(g)$ is of infinite order.
\end{lem}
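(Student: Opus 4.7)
The plan is to combine a Pólya–Clunie type lower bound for the characteristic of a composition with the super-polynomial growth of $M(r,g)$ for transcendental entire $g$. First, observe that every rational function has order zero, so the hypothesis forces $f$ to be a \emph{transcendental} meromorphic function with order $\rho := \rho(f) \in (0,\infty]$. Fixing any $c$ with $0 < c < \rho$ (or any $c > 0$ if $\rho = \infty$), the definition of order as a limsup furnishes a sequence $R_n \to \infty$ along which $T(R_n, f) \geq R_n^{c}$.

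Second, I would establish a composition inequality of the form
\begin{equation*}
T(r, f \circ g) \;\geq\; c_1 \, T\bigl( c_2 \, M(r/2, g),\, f \bigr) \;-\; O(1)
\end{equation*}
for absolute positive constants $c_1, c_2$ and all $r$ sufficiently large. The geometric content is that, for entire $g$, the image of the disk $\{|z|\leq r\}$ under $g$ covers, with multiplicity, a disk of radius comparable to $M(r/2,g)$; this is a quantitative consequence of the argument principle (together with a Borel–Carathéodory type estimate to handle $g(0)$). Feeding this into Cartan's identity $T(r, h) = \frac{1}{2\pi} \int_0^{2\pi} N(r, e^{i\theta}, h) \, d\theta + O(1)$, and pulling back $a$-points of $f$ through $g$, yields $N(r, a, f \circ g) \gtrsim N(c_2 M(r/2, g), a, f)$ uniformly in $a$, and integrating gives the displayed bound.

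Third, I combine the two ingredients. Since $g$ is transcendental entire, $\log M(r, g) / \log r \to \infty$, so for every fixed $K > 0$ one has $M(r, g) \geq r^{K}$ for all $r$ large. The map $r \mapsto c_2 M(r/2, g)$ is continuous and tends to $\infty$, so for each $R_n$ from the first step I can pick $r_n$ with $c_2 M(r_n/2, g) = R_n$; then $R_n \geq c_2 (r_n/2)^{K}$ for $r_n$ large, and
\begin{equation*}
T(r_n, f \circ g) \;\geq\; c_1 R_n^{c} - O(1) \;\geq\; c_1 c_2^{c} (r_n/2)^{Kc} - O(1).
\end{equation*}
Since $K$ was arbitrary, $\rho(f \circ g) \geq Kc$ for every $K$, and therefore $\rho(f \circ g) = \infty$.

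The main obstacle is the composition inequality above. Lemma \ref{lem:Cl} (Clunie) and Lemma \ref{lem:GY} in the paper only give that $T(r, f\circ g)$ grows faster than $T(r, f)$ or $T(r, g)$ separately, which is insufficient to force infinite order: they leave open the possibility that $T(r, f \circ g)$ behaves like $r^{\rho(g)}$ times a slowly increasing factor. What is really needed is the stronger quantitative statement that $T(r, f \circ g)$ dominates the characteristic of $f$ evaluated at the much larger argument $M(r/2, g)$; this is a geometric covering input about the entire map $g$ rather than a purely Nevanlinna-theoretic one. Once it is in hand, the super-polynomial growth of $M(r, g)$ amplifies the mere positivity of $\rho(f)$ into $\rho(f \circ g) = \infty$.
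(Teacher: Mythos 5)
The paper does not give a proof of this lemma: it is simply quoted from Edrei--Fuchs \cite{EF64}, so there is no internal argument to compare with. Your overall strategy---a P\'olya-type composition inequality of the form $T(r,f\circ g)\ge c_1\,T(c_2\,M(r/2,g),f)-O(1)$ combined with the super-polynomial growth of $M(r,g)$ for transcendental entire $g$---is exactly the classical route to this statement, and the final amplification step (choosing $r_n$ so that $c_2\,M(r_n/2,g)$ hits a sequence along which $T(R,f)\ge R^{c}$, then letting the auxiliary exponent $K$ run to infinity) is carried out correctly. You are also right that Lemma~\ref{lem:Cl} and Lemma~\ref{lem:GY} by themselves cannot yield infinite order, since they compare characteristics at the same radius $r$ and never promote the argument of $T(\cdot,f)$ to the much larger scale $M(r/2,g)$.

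The gap is in your sketch of the composition inequality itself, and it is more serious than the proposal indicates. The Bohr--Landau type covering input produces a disk of radius comparable to $M(r/2,g)$ inside $g(\{|z|\le r\})$, but that disk is centred at some point $w_0$ with $|w_0|$ of the same order as, or larger than, the radius; it need not contain the origin, and already for $g=e^{z}$ the image of $\{|z|\le r\}$ is an annulus omitting a neighbourhood of $0$. Hence pulling back $a$-points of $f$ through $g$ controls the $a$-points of $f$ lying in a \emph{shifted} disk $D(w_0,\rho)$, which is not directly comparable to $N(c_2 M(r/2,g),a,f)$ counted from the origin; the claimed uniform estimate $N(r,a,f\circ g)\gtrsim N(c_2 M(r/2,g),a,f)$ therefore does not follow as written, and this is precisely where the meromorphic case diverges from P\'olya's maximum-modulus argument for entire $f$. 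The known proofs (Edrei--Fuchs for entire $f$, and the later meromorphic extensions, e.g.\ by Bergweiler) overcome this by averaging over the target value via the Ahlfors--Shimizu characteristic and by exploiting that $n(r,a,g)\ge 1$ holds for all $a$ with $|a|<c\,M(r/2,g)$ outside a thin set of exceptional values; one must also permit a finite-measure exceptional set of radii $r$ and verify that the sequence $r_n$ can be chosen to avoid it (which works because the bound one is after is monotone in the argument $R$). So the architecture of your argument is sound and standard, but the composition inequality---which you rightly flag as the main obstacle---is not established by the covering-plus-pullback outline you give and needs either a precise citation or a substantially more careful proof.
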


\subsection{Algebraic Independence} In this part, we will recall some basic definitions in algebra that we are going to use.

\begin{definition} Let $\psi_1, \dots, \psi_n$ be meromorphic functions in $\mathbb{C}^m$, and let $\mathbb{F}$ be a field. We say that $\psi_1, \dots, \psi_n$ are \emph{$\mathbb{F}$-linearly independent modulo $\mathbb{C}$} if for $(i_1, \dots, i_n)\in\mathbb{F}^n$ and $a\in\mathbb{C}$, the equation $$i_1\psi_1+i_2\psi_2+\dots+i_n\psi_n=a$$ can only be satisfied by $i_1=i_2=\dots=i_n=a=0$.  
 \end{definition}
 
  %  \begin{definition} Let $\psi_1, \dots, \psi_n$ be meromorphic functions on $\mathbb{C}^m$, we say $\psi_1, \dots, \psi_n$ are linearly independent over $\mathbb{Q}$ if for $(i_1, \dots, i_n)\in\mathbb{Q}^n$ , the equation $$i_1\psi_1+i_2\psi_2+\dots+i_n\psi_n=0$$ can only be satisfied by $i_1=i_2=\dots=i_n=0$.   \end{definition}
  Let $I=(i_0, i_1, \dots, i_n)$ be a multi-index with $|I|=i_0+i_1+\cdots+i_n$. A polynomial in the variables $u_0, u_1, \dots, u_n$ with functional coefficients in a field $\mathcal{S}$ can always be expressed as $$P(z, u_0, u_1, \dots, u_n)=\sum_{I\in\Lambda}a_I(z)u_0^{i_0}u_1^{i_1}\cdots u_n^{i_n},$$ where the coefficients $a_I$ are functions in $\mathcal{S}$ and $\Lambda$ is an index set.

\begin{definition} Let $f_0, f_1, \dots, f_n$ be meromorphic functions in $\mathbb{C}^m$. We say $f_0, f_1, \dots, f_n$ are \emph{algebraically independent over $\mathcal{S}$} or \emph{$\mathcal{S}$-algebraically independent}, if for any nontrivial polynomial $P(z, u_0, u_1, \dots, u_n)$ in $u_0, \dots, u_n$ with coefficients in $\mathcal{S}$, $P(z, f_0, f_1, \dots, f_n)\not\equiv 0.$

In particular, if $P(z, u_0, \dots, u_n)$ is a linear homogeneous polynomial in $u_0, \dots, u_n$ with coefficients in $\mathcal{S}$, then $f_0, \dots, f_n$ are said to be \emph{linearly independent} over $\mathcal{S}$ or \emph{$\mathcal{S}$-linearly independent}.
\end{definition}

\begin{definition}
Let $L$ be a field and $K\subset L$ a sub-field. A \emph{transcendence basis} for $L$ over $K$ is a maximal algebraically independent over $K$ subset. The \emph{transcendence degree} for $L$ over $K$ ($\mathrm{tr.deg}_{K}L$) is equal to the cardinality of the transcendence basis for $L$ and $K$.
\end{definition}
%%
%%  To put mathematics in a line it is put between dollor signs.  That
%%  is $(x+y)^2=x^2+2xy+y^2$
%%

%Anyone caught using formulas such as $\sqrt{x+y}=\sqrt{x}+\sqrt{y}$ 
%or $\frac{1}{x+y}=\frac{1}{x}+\frac{1}{y}$ will fail.

%%
%%% Displayed mathematics is put between double dollar signs.  
%%

%The binomial theorem is
%$$
%(x+y)^n=\sum_{k=0}^n\binom{n}{k}x^ky^{n-k}.
%$$
%A favorite sum of most mathematicians is
%$$
%\sum_{n=1}^\infty \frac{1}{n^2}=\frac{\pi^2}{6}.
%$$
%Likewise a popular integral is
%$$
%\int_{-\infty}^\infty e^{-x^2}\,dx=\sqrt{\pi}
%$$

%%
%% A Theorem is stated by
%%

%\begin{thm} The square of any real number is non-negative.\end{thm}

%%
%% Its proof is set off by
%% 

%\begin{proof}Any real number $x$ satisfies $x>0$, $x=0$, or $x<0$. If $x=0$, then $x^2=0\ge 0$.  If $x>0$ then as a positive time a positive is positive we have $x^2=xx>0$.  If $x<0$ then $-x>0$ and so by what we have just done $x^2=(-x)^2>0$.  So in all cases $x^2\ge0$. \end{proof}

%%
%% A new section is started as follows:
%%

%%%%%%%%%%%%%%%%%%%%%%%%%%%%%%%%%%%%%%%%%%%%%%%%%%%%%%%%%%%%%%%%%%%%%%
\section{Proof of Main Theorems}\label{sec:main}
%%%%%%%%%%%%%%%%%%%%%%%%%%%%%%%%%%%%%%%%%%%%%%%%%%%%%%%%%%%%%%%%%%%%%%
In this section, we will study the Ax-Schanuel type inequalities utilizing the Nevanlinna theory when the exponential map is replaced by a transcendental entire function. 

\subsection{Proof of Theorem \ref{thm:rho}}

\begin{proof}[Proof of Theorem \ref{thm:rho}] 
From Lemma \ref{lem:Cl}, we have $T(r, f_i)=S(r, F(f_i))$ for all $i$. Then we are going to prove that $f_1, \dots, f_n, F(f_n)$ are algebraically independent over $\mathbb{C}$. 

Let $P(u_1, \dots, u_n, v_1)$ be a non-zero polynomial in $u_1, \dots, u_n, v_1$ with constant coefficients. We may write $P(u_1, \dots, u_n, v_1)$ as the following:
$$P(u_1, \dots, u_n, v_1)=\sum_{j=0}^lP_j(u_1, \dots, u_n)v_1^j$$ where $P_j(u_1, \dots, u_n)'s$ are polynomials in $u_1, \dots, u_n$ over $\C$.   Suppose that $P(f_1, \dots, f_n, F(f_n))\equiv 0$ and we denote $P_j(f_1, \dots, f_n)$ by $P_j$. 

It is not difficult to check that $T(r, P_j)=S(r, F(f_n))$ for all $j$, as $T(r, f_i)=S(r, f_{i+1})$ for $1\leq i\leq n-1$, and $T(r, f_n)=S(r, F(f_n))$. 
By Lemma \ref{thm:exg1}, one can conclude that $$T(r, F(f_n))\leq \sum_{j=0}^lT(r, P_j)+O(1)=S(r, F(f_n))$$ which is a contradiction. Thus $P_j\equiv 0$ for all $j$.

Repeating the same argument to each $P_j\equiv 0$, one can deduce that all coefficients of $P(u_1, \dots, u_n, v_1)$ are identically equal to zero.  Therefore, the inequality (\ref{eqn:n}) follows.

\

Next, we will prove the equality (\ref{eqn:2n}). Since each $f_i$ is of finite order, and $F$ is an entire function with positive order, by Lemma \ref{lem:EF}, we have each $F(f_j)$ is of infinite order and hence $$T(r, f_i)=S(r, F(f_j))\quad \mathrm{for}\quad 1\leq i, j \leq n.$$ 

On the other hand, from Lemma \ref{lem:Zheng} and $T(r, f_i)=S(r, f_{i+1})$, we have $$\frac{T(\alpha r, f_i)}{T(r, f_{i+1})}\leq \frac{CT(r, f_i)}{T(r, f_{i+1})}$$ holds outside a set $E$ with finite logarithmic measure, for some constants $\alpha>1$ and $C>1$. Therefore, $$T(\alpha r, f_i)=S(r, f_{i+1}).$$ 
By Lemma \ref{lem:GY}, we have $$T(r, F(f_i))=S(r, F(f_{i+1})), 1\leq i\leq n-1.$$
Hence, for $i=0, \dots, n-1$, $$T(r, f_i)=S(r, f_{i+1}),\quad T(r, f_n)=S(r, F(f_1)) $$ and $$T(r, F(f_i))=S(r, F(f_{i+1})).$$ 

Let $P(u_1, \dots, u_n, v_1, \dots, v_n)$ be a nontrivial polynomial in $u_1, \dots, u_n, v_1, \dots, v_n$ over $\mathbb{C}$ such that $$P(f_1, \dots, f_n, F(f_1), \dots, F(f_n))\equiv 0.$$ Then the equality (\ref{eqn:2n}) follows from the same argument used in the proof of inequality (\ref{eqn:n}).  
\end{proof}

Now, we will give some examples to illustrate the optimality of Theorem \ref{thm:rho}.

\begin{example}\label{rmk:rho}
Let $f_1=z, f_2=e^z$ and $F(z)=e^z$. We have $T(r, f_1)=S(r, f_2)$ and it is easy to verify that $\mathrm{tr}.\deg_{\mathbb{C}}\mathbb{C}(z, e^z, e^z, e^{e^z})=3$. This example shows that the inequality (\ref{eqn:n}) is sharp. 
\end{example}

\begin{example} For equality (\ref{eqn:2n}), the conditions that each $f_i$ is \emph{transcendental} and of \emph{finite order} are necessary. The example in Example \ref{rmk:rho}, shows that the \emph{transcendence} of each $f_i$ is necessary. Another such example is that let $f_1=e^z, f_2=e^{e^z}$ and $F(z)=e^z$, then $f_1$ and $f_2$ are transcendental entire functions, and $\rho(f_1)=1$,  $\rho(f_2)=\infty$,  but $$\mathrm{tr}.\deg_{\mathbb{C}}\mathbb{C}(e^z,e^{e^z},  e^{e^z}, e^{e^{e^z}})=3\neq 4.$$
\end{example}

\subsection{Proof of Theorem \ref{thm:L2}}
%We note that in Theorem \ref{thm:rho}, there always exists a condition on the increasing Nevanlinna characteristic functions for $f_i$, thus it would be interesting to consider the case of entire functions with comparative characteristic functions.
We start with a theorem which is an important application of Borel's Theorem (Lemma \ref{thm:EBL}) and Clunie's Lemma, and it will link to the famous Lindemann-Weierstrass Theorem in transcendence number theory.
      
  \begin{thm}\label{thm:ex2}
  Let $g$ be non-constant meromorphic function in $\mathbb{C}^m$ and $\psi_1, \dots, \psi_n$ be meromorphic functions in $\mathbb{C}^m$ such that the following relations hold:
  \begin{enumerate}[1)]
  \item\label{en:1} $\psi_1, \dots, \psi_n$ are linearly independent over $\mathbb{Q}$;
  \item\label{en:2} $T(r, \psi_i)=S(r, g)$ and $\psi_ig$ is entire in $\mathbb{C}^m$, for all $i$;
  \item\label{en:3} $\psi_{k_1}g, \dots, \psi_{k_q}g$  are algebraically independent over $\mathbb{C}$, for $\{k_1, \dots, k_q\}\subset\{1, \dots, n\}$.
  \end{enumerate}
  Then $\psi_{k_1}g, \dots, \psi_{k_q}g, e^{\psi_1g}, \dots, e^{\psi_ng}$ are algebraically independent over $\mathbb{C}$.
  \end{thm}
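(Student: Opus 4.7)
The plan is to assume for contradiction that there is a nontrivial polynomial relation
\[
P(\psi_{k_1}g,\dots,\psi_{k_q}g,\,e^{\psi_1 g},\dots,e^{\psi_n g})\equiv 0,
\]
and, by collecting terms according to the exponential monomials, rewrite it as
\[
\sum_{J\in\Lambda} Q_J(\psi_{k_1}g,\dots,\psi_{k_q}g)\,e^{(j_1\psi_1+\cdots+j_n\psi_n)g}\equiv 0,
\]
where $J=(j_1,\dots,j_n)$ runs over a finite set of multi-indices and each $Q_J$ is a polynomial over $\mathbb{C}$. The aim is to apply Borel's Theorem (Lemma \ref{thm:EBL}) to this identity in order to force every $Q_J(\psi_{k_1}g,\dots,\psi_{k_q}g)$ to vanish, and then invoke hypothesis \ref{en:3}) to conclude that each $Q_J$ is the zero polynomial.

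Before Borel's Theorem can be applied, two things must be checked. First, the exponents $(J\cdot\Psi)g$ must have pairwise non-constant differences: for $J\neq J'$, the meromorphic function $(J-J')\cdot\Psi$ is a nonzero $\mathbb{Q}$-linear combination of $\psi_1,\dots,\psi_n$ and is therefore not identically zero by hypothesis \ref{en:1}). Combining $T(r,\psi_i)=S(r,g)$ with the product/quotient estimates of the characteristic function yields $T(r,\psi_i g)=(1+o(1))T(r,g)$, and more generally $T\bigl(r,(J-J')\cdot\Psi\cdot g\bigr)=(1+o(1))T(r,g)\to\infty$, so this entire function is non-constant. Second, the growth of the coefficients must be dominated by the minimum of the exponential growths: since each $\psi_i g$ has $T(r,\psi_i g)=O(T(r,g))$, a standard estimate gives $T(r,Q_J(\psi_{k_1}g,\dots,\psi_{k_q}g))=O(T(r,g))$, while Clunie's Lemma \ref{lem:Cl} applied to the transcendental entire function $(J-J')\cdot\Psi g$ on $\mathbb{C}^m$ and the transcendental meromorphic $e^z$ on $\mathbb{C}$ delivers $T(r,(J-J')\cdot\Psi g)=o(T(r,e^{(J-J')\cdot\Psi g}))$. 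Chaining these gives
\[
T\bigl(r,Q_J(\psi_{k_1}g,\dots,\psi_{k_q}g)\bigr)=o\Bigl(\min_{J\neq J'}T(r,e^{(J-J')\cdot\Psi g})\Bigr)
\]
outside an exceptional set of finite measure, which is exactly the hypothesis of Lemma \ref{thm:EBL}.

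Borel's Theorem then yields $Q_J(\psi_{k_1}g,\dots,\psi_{k_q}g)\equiv 0$ for every $J\in\Lambda$. Since by hypothesis \ref{en:3}) the functions $\psi_{k_1}g,\dots,\psi_{k_q}g$ are algebraically independent over $\mathbb{C}$, each polynomial $Q_J$ vanishes identically, so $P$ is the zero polynomial, which contradicts the nontriviality of the original relation.

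The main obstacle is the growth comparison in the second step: one must verify that the polynomial coefficients $Q_J(\psi_{k_1}g,\dots,\psi_{k_q}g)$ — whose $T$-functions are only comparable to $T(r,g)$, not strictly smaller — are still of smaller growth than $\min_{J\ne J'}T(r,e^{(J-J')\cdot\Psi g})$. It is precisely Clunie's Lemma that supplies the strict gap, promoting the $O(T(r,g))$ estimate for the coefficients to an $o(\cdot)$ estimate with respect to the exponential terms, and making Borel's Theorem applicable.
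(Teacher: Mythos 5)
Your proposal follows the paper's proof essentially verbatim: you decompose the alleged relation by exponential monomials to get $\sum_J Q_J(\psi_{k_1}g,\dots,\psi_{k_q}g)\,e^{(J\cdot\Psi)g}\equiv 0$, verify via hypothesis~\ref{en:1}) that the exponent differences $(J-J')\cdot\Psi\cdot g$ are nonzero entire functions, use $T(r,\psi_i)=S(r,g)$ together with Clunie's Lemma~\ref{lem:Cl} to show the coefficients are small relative to the exponential terms, apply Borel's Theorem~\ref{thm:EBL} to annihilate each $Q_J(\psi_{k_1}g,\dots,\psi_{k_q}g)$, and finally invoke hypothesis~\ref{en:3}) to force each $Q_J$ to be the zero polynomial. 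This is exactly the paper's argument; your write-up is if anything slightly more explicit about verifying the non-constancy of the exponent differences required by Borel's Theorem.
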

  
\begin{proof} Suppose there exists a polynomial $P(z_1, \dots, z_q, w_1, \dots, w_n)$ in the variables $z_1, \dots, z_q, w_1, \dots, w_n$ with coefficients in $\mathbb{C}$ such that $$P(\psi_{k_1}g, \dots, \psi_{k_q}g, e^{\psi_1g}, \dots, e^{\psi_ng})\equiv0.$$ We may write $P(z_1, \dots, z_q, w_1, \dots, w_n)$ in the following form:
$$P(z_1, \dots, z_q, w_1, \dots, w_n)=\sum_{i_1, \dots, i_n}p_{i_1, \dots, i_n}(z_1, \dots, z_q)w_1^{i_1}\cdots w_n^{i_n}$$ 
where $p_{i_1, \dots, i_n}(z_1, \dots, z_q)$ is a polynomial in $z_1, \dots, z_q$ with coefficients in $\mathbb{C}$, for all $i_1, \dots, i_n$. 
If $P(\psi_{k_1}g, \dots, \psi_{k_q}g, e^{\psi_1g}, \dots, e^{\psi_ng})\equiv0$, then 
\begin{equation}\label{eqn:Pp}
\sum_{i_1, \dots, i_n}p_{i_1, \dots, i_n}(\psi_{k_1}g, \dots, \psi_{k_q}g)e^{(i_1\psi_1+\dots+i_n\psi_n)g}\equiv0.
\end{equation} 

We then write the equation (\ref{eqn:Pp}) as $$P(\psi_{k_1}g, \dots, \psi_{k_q}g, e^{\psi_1g}, \dots, e^{\psi_ng})=\sum_{i_1, \dots, i_n}p_{i_1, \dots, i_n}e^{h_{i_1, \dots, i_n}g}\equiv 0,$$ 
where $$p_{i_1, \dots, i_n}=p_{i_1, \dots, i_n}(\psi_{k_1}g, \dots, \psi_{k_q}g)$$ 
and $$h_{i_1, \dots, i_n}=i_1\psi_1+\dots+i_n\psi_n.$$  

It is not hard  to see that $T(r, p_{i_1, \dots, i_n})=O(T(r, g))$ and $$T(r, g)=T(r, (h_{i_1, \dots, i_n}-h_{j_1, \dots, j_n})g)=S(r, e^{(h_{i_1, \dots, i_n}-h_{j_1, \dots, j_n})g})$$ for all $(i_1, \dots, i_n)\neq (j_1, \dots, j_n)$, by the assumption (\ref{en:1}) and (\ref{en:2}) and Lemma \ref{lem:Cl}. 
Therefore, applying Lemma \ref{thm:EBL}, we have $$p_{i_1, \dots, i_n}(\psi_{k_1}g, \dots, \psi_{k_q}g)\equiv 0$$ for all $(i_1, \dots, i_n)$. 

On the other hand, $\psi_{k_1}g, \dots, \psi_{k_q}g$ are algebraically independent over $\mathbb{C}$, thus all coefficients of $p_{i_1, \dots, i_n}$ are identically equal to zero. Hence the result follows.
\end{proof}

\begin{cor}\label{cor:ex2} Let $f_1, \dots, f_n$ be entire functions in $\mathbb{C}^m$ such that they are algebraically independent over $\mathbb{C}$. If $m(r, f_i/f_1)=S(r, f_1)$, for $i=1, \dots, n$ and $\delta(0, f_1)=1$, then $f_1, \dots, f_n, e^{f_1}, \dots, e^{f_n}$ are algebraically independent over $\mathbb{C}$.
\end{cor}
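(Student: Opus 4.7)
The plan is to deduce the corollary as an immediate consequence of Theorem \ref{thm:ex2} applied with $g := f_1$ and $\psi_i := f_i/f_1$ for $i = 1, \dots, n$. With this choice one has $\psi_i g = f_i$, so taking $\{k_1, \dots, k_q\} = \{1, \dots, n\}$ in the conclusion yields precisely that $f_1, \dots, f_n, e^{f_1}, \dots, e^{f_n}$ are algebraically independent over $\mathbb{C}$, which is what we want. The task therefore reduces to verifying the three hypotheses (\ref{en:1})--(\ref{en:3}) of Theorem \ref{thm:ex2} for this choice of $g$ and $\psi_i$.

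Hypothesis (\ref{en:3}) is immediate: the algebraic independence of $\psi_{k_1} g, \dots, \psi_{k_q} g = f_{k_1}, \dots, f_{k_q}$ is inherited directly from that of $f_1, \dots, f_n$. For hypothesis (\ref{en:1}), any $\mathbb{Q}$-linear relation $\sum_i a_i \psi_i \equiv 0$ becomes, after multiplication by $f_1$, a relation $\sum_i a_i f_i \equiv 0$; since $f_1, \dots, f_n$ are algebraically (and \emph{a fortiori} $\mathbb{Q}$-linearly) independent, every $a_i$ vanishes.

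The one genuinely substantive point is hypothesis (\ref{en:2}). Entirety of $\psi_i g = f_i$ is clear from the hypothesis that each $f_i$ is entire. For the growth estimate, I would decompose
$$T(r, f_i/f_1) = m(r, f_i/f_1) + N(r, f_i/f_1).$$
The proximity term is $S(r, f_1)$ by assumption. Since $f_i$ is entire, the poles of $f_i/f_1$ lie among the zeros of $f_1$, whence $N(r, f_i/f_1) \leq N(r, 1/f_1)$. The deficiency hypothesis $\delta(0, f_1) = 1$ forces $N(r, 1/f_1) = o(T(r, f_1))$, hence $N(r, 1/f_1) = S(r, f_1)$. Combining these two estimates yields $T(r, \psi_i) = S(r, f_1) = S(r, g)$, as required. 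With all three hypotheses in hand, Theorem \ref{thm:ex2} delivers the corollary.

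There is no real obstacle; the proof is a matter of translating the hypotheses of the corollary into the input format of Theorem \ref{thm:ex2}. The only mildly substantive observation is that the deficiency condition $\delta(0, f_1) = 1$ is exactly what is needed to control the zero set of $f_1$ (and thereby the pole set of the quotients $f_i/f_1$) inside the characteristic function, the proximity side being already bounded by the standing assumption $m(r, f_i/f_1) = S(r, f_1)$.
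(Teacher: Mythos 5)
Your proof takes exactly the paper's route: set $g=f_1$, $\psi_i=f_i/f_1$, and invoke Theorem~\ref{thm:ex2}. The paper dismisses the verification of hypothesis~(\ref{en:2}) with ``it is not hard to show''; your decomposition $T(r,f_i/f_1)=m(r,f_i/f_1)+N(r,f_i/f_1)$ together with $N(r,f_i/f_1)\leq N(r,1/f_1)=S(r,f_1)$ is precisely the omitted calculation, so the argument is correct and matches.
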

\begin{proof} It is not hard to show that $T(r, f_i/f_1)=S(r, f_1)$ if $m(r, f_i/f_1)=S(r, f_1)$ and $\delta(0, f_1)=1$. Let $\psi_i=f_i/f_1, i=1, \dots, n$, $g=f_1$. Applying Theorem \ref{thm:ex2}, we are done.
\end{proof}

Actually, Corollary \ref{cor:ex2} is a precursor of Theorem \ref{thm:L2}. To prove Theorem \ref{thm:L2}, we will first establish the following theorem which shows that analytic dependence will imply algebraic dependence for certain class of entire functions. The proof will follow closely the work of F. Gross and C. F. Osgood \cite{GO96} and B. Q. Li \cite{Li96} on the reduction of an analytic ODE to an algebraic ODE.

\begin{thm}\label{thm:L}
Let $f_1, f_2, \dots, f_n$ be non-constant entire functions in $\mathbb{C}^m$ with $$m(r, f_i/f_1)=S(r, f_1)$$ for each $i=1, 2, \dots, n$. Let $$G(w_1, w_2, \dots, w_n)=\sum_{|I|=0}^{\infty}a_{I}(z)w_1^{i_1}w_2^{i_2}\cdots w_n^{i_n}$$ be a nonzero power series in $\mathbb{C}^n$ where the coefficients $a_I(z)$ are entire functions in $\mathbb{C}^m$ satisfying that 
$$m\left(r, \displaystyle\sum_{|I|=0}^{\infty}|a_I|\right)=S(r, f_1).$$ If $\delta(0,f_1)=0$ and $G(f_1, f_2, \dots, f_n)\equiv 0$, then there exists a nonzero polynomial $P$ with coefficients being polynomials of some $a_I$ (and hence are small functions of $f_1$) such that $$P(f_1, f_2, \dots, f_n)\equiv 0.$$ 
\end{thm}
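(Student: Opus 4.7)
The plan is to follow the Gross--Osgood \cite{GO96} and B.~Q.~Li \cite{Li96} style of turning an infinite analytic identity into a polynomial one by combining the small-function character of the coefficients with the zero distribution of $f_1$. First I would introduce $g_j := f_j/f_1$ for $2 \leq j \leq n$, so that $m(r, g_j) = S(r, f_1)$ by hypothesis, and then stratify $G$ by the degree in $w_1$:
\[
G(w_1, \dots, w_n) \;=\; \sum_{k=0}^{\infty} w_1^{k}\, B_k(w_2, \dots, w_n), \qquad B_k \;=\; \sum_{I : i_1 = k} a_I \prod_{j \geq 2} w_j^{i_j}.
\]
The relation $G(f_1, \dots, f_n) \equiv 0$ then becomes $\sum_{k=0}^{\infty} f_1^{k}\, \beta_k \equiv 0$ with $\beta_k := B_k(f_2, \dots, f_n)$. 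The aim is to produce a $k$ for which $\beta_k \equiv 0$, since then the finite expression $\sum_{I : i_1 = k} a_I \prod_{j \geq 2} f_j^{i_j} \equiv 0$ is already a nonzero polynomial identity of the required type, with coefficients equal to the $a_I$'s themselves.

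I would argue by induction on $n$. For the base case $n = 1$ the identity reads $\sum_{k} a_k(z) f_1(z)^{k} \equiv 0$ with entire $a_k$ satisfying $T(r, a_k) = m(r, a_k) = S(r, f_1)$. Rearranging gives $a_0 = -f_1 \sum_{k \geq 1} a_k f_1^{k-1}$, and the convergence secured by $m(r, \sum_I |a_I|) = S(r, f_1)$ lets us view the tail as an entire function, so that $a_0$ vanishes on the zero divisor of $f_1$; the deficiency hypothesis $\delta(0, f_1) = 0$ gives $N(r, 1/a_0) \geq N(r, 1/f_1) = (1 - o(1))T(r, f_1)$, which combined with $T(r, a_0) = S(r, f_1)$ forces $a_0 \equiv 0$. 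Iterating peels off $a_k \equiv 0$ for every $k$, so no nontrivial $G$ can exist and the case is settled. For the inductive step, evaluating $\sum_k f_1^k \beta_k \equiv 0$ at a zero $z_0$ of $f_1$ gives $\beta_0(z_0) = 0$; the same $N(r, 1/f_1)$-vs.-$T(r, \beta_0)$ comparison forces $\beta_0 \equiv 0$, so that $B_0(f_2, \dots, f_n) \equiv 0$ is a power-series identity in $n-1$ variables and the inductive hypothesis produces a polynomial in $(f_2, \dots, f_n)$ that also works as a polynomial in $(f_1, \dots, f_n)$. Once $\beta_0$ is peeled off, one divides the remaining relation by $f_1$ and repeats.

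The principal obstacle is the $T$-function control of $\beta_0 = B_0(f_2, \dots, f_n)$ in the inductive step, since a priori it is an infinite power series in $f_2, \dots, f_n$ whose characteristic need not be small with respect to $f_1$. The uniform proximity hypothesis $m(r, \sum_I |a_I|) = S(r, f_1)$ is tailor-made to dominate the series defining $\beta_0$ in Nevanlinna norm, and together with $T(r, f_j) = O(T(r, f_1))$ (a consequence of $m(r, g_j) = S(r, f_1)$ and the First Main Theorem) it is what one needs to pin down the precise quantitative bound $T(r, \beta_0) = o(N(r, 1/f_1))$ that powers the forcing $\beta_0 \equiv 0$. Lemma~\ref{thm:exg1} of the preliminaries supplies the parallel polynomial-vanishing control that translates such vanishings back into relations on coefficients and lets the induction terminate. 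The polynomial $P$ so obtained has coefficients that are polynomials in finitely many $a_I$'s, as asserted.
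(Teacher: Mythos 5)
Your stratification-and-induction scheme takes a genuinely different route from the paper's proof (which instead applies the Pad\'{e}-type Lemma~\ref{lem:LB} together with the reduction estimate of Lemma~\ref{lem:ldl}), but it contains several gaps that I do not see how to close.

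The central one is the asserted bound $T(r, \beta_0)=o\bigl(N(r,1/f_1)\bigr)$. You acknowledge that $\beta_0 = B_0(f_2,\dots,f_n)$ is an infinite power series in $f_2,\dots,f_n$, and you claim the hypothesis $m\bigl(r,\sum_I|a_I|\bigr)=S(r,f_1)$ ``dominates the series defining $\beta_0$ in Nevanlinna norm.'' It does not: the factor $|f_2|^{i_2}\cdots|f_n|^{i_n}$ grows without bound when some $|f_j|>1$, and there are arbitrarily high powers in $\beta_0$. The statement is already falsified by the simplest example in reach: take $n=2$, $f_2=f_1$, and $G(w_1,w_2)=w_1-w_2$. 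Then $m(r,f_2/f_1)=O(1)$ and $\sum_I|a_I|=2$, so all your smallness hypotheses hold; yet $\beta_0=-f_2=-f_1$, so $T(r,\beta_0)=T(r,f_1)$, which is not $o(N(r,1/f_1))$. Consequently, the forcing $\beta_0\equiv 0$ is not available (and is false here: $\beta_0=-f_1\not\equiv 0$). This is exactly the point that Lemma~\ref{lem:LB} is built to circumvent: it trades $G$ for a \emph{polynomial} $P$ of degree $p$ together with a tail $QG+P$ whose lowest-degree terms have degree $v$ with $p<v/L$, and then Lemma~\ref{lem:ldl} uses the joint constraint ``low $p$, high $v$'' to bound $m(r, P(f)/f_1^N)$ for $p\le N\le v$; you have no analogue of this control in the stratified series.

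Two further issues. First, even if $\beta_0\equiv 0$, the object $B_0(f_2,\dots,f_n)\equiv 0$ is still an \emph{infinite} power-series identity, not a polynomial one, contrary to the assertion that ``the finite expression $\sum_{I:i_1=k}a_I\prod_{j\ge 2}f_j^{i_j}\equiv 0$ is already a nonzero polynomial identity.'' Second, the inductive step requires the theorem's hypotheses for $(f_2,\dots,f_n)$, i.e.\ $m(r,f_i/f_j)=S(r,f_j)$ for some distinguished $f_j$ and the corresponding deficiency hypothesis on $f_j$; none of this is implied by the original hypotheses on $f_1$, so the induction cannot be closed as written.

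Finally, a remark on the hypothesis: you read the statement literally with $\delta(0,f_1)=0$. The paper's proof actually shows that if $P(f_1,\dots,f_n)\not\equiv 0$ for every $L$, then $m(r,1/f_1)\le (1/L)T(r,f_1)+S(r,f_1)$, hence $\delta(0,f_1)=0$; the intended (and used, in Theorem~\ref{thm:L2} and Example~\ref{emp:2}) hypothesis is therefore $\delta(0,f_1)>0$, and the stated ``$\delta(0,f_1)=0$'' appears to be a typographical error. Your deficiency argument in the base case relies on $\delta(0,f_1)=0$ giving $N(r,1/f_1)\sim T(r,f_1)$, which is the opposite sign from what the proof actually needs; but independently of which reading one adopts, the gaps above stand.
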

%\begin{remark} Let $f_1, f_2, \dots, f_n$ be entire functions and $h_j\not\equiv 0\ (1\leq j\leq N)$ be meormorphic in $\mathbb{C}^m$ with $$\sum_{j=1}^NT(r, h_j)=O(T(r, f_1)).$$ Also, let $F_j(w_1, \cdots, w_n)\not\equiv 0$ be meromorphic in $\mathbb{C}^n$ satisfying that $$\sum_{j=1}^NF_j(f_1, \cdots, f_n)h_j=0.$$ It would be interesting to consider proper conditions under which there exist polynomials $P_j$, not all zero, such that $$\sum_{j=1}^NP_j(f_1, \cdots, f_n)h_j=0.$$ \end{remark}

To prove Theorem \ref{thm:L}, we need the following lemmata (Lemma \ref{lem:ldl} and Lemma \ref{lem:LB}). 

\begin{lem}\label{lem:ldl}
Let $f_1, f_2, \dots, f_n$ be entire functions in $\mathbb{C}^m$ with $m(r, f_i/f_1)=S(r, f_1)$ and $F(w_1, w_2, \dots, w_n)=\displaystyle\sum_{|I|=v}^{\infty}a_{I}(z)w^I$ be a nonzero power series, where $v\geq 0$ and the coefficients $a_I(z)$ are entire functions in $\mathbb{C}^m$ with $m\left(r, \displaystyle\sum_{|I|=v}^{\infty}|a_I|\right)=S(r, f_1)$. If $(f_1, f_2,  \dots, f_n)$ is a solution of the following equation
\begin{equation}\label{eqn:FP}
G(f_1, f_2, \dots, f_n)=P(f_1, f_2, \dots, f_n)
\end{equation} 
where $P$ is a polynomial of degree $u\leq v$ with coefficients being small functions of $f_1$. Then for any $N$ with $u\leq N\leq v$, we have $$m\left(r, \frac{G(f_1, f_2, \dots, f_n)}{f_1^N}\right)=S(r, f_1).$$
\end{lem}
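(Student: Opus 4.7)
The plan is to exploit two representations of the ratio $G(f_1,\dots,f_n)/f_1^N$: on the one hand the given power series for $G$, on the other the identity $G(f_1,\dots,f_n)=P(f_1,\dots,f_n)$. Using the identity $f_1^{i_1}\cdots f_n^{i_n}=f_1^{|I|}\prod_{k\geq 2}(f_k/f_1)^{i_k}$, I would rewrite
\[
\frac{G(f_1,\dots,f_n)}{f_1^N}=\sum_{|I|=v}^{\infty}a_I\,f_1^{|I|-N}\prod_{k=2}^n\Bigl(\tfrac{f_k}{f_1}\Bigr)^{i_k}=\sum_{|J|\leq u}b_J\,f_1^{|J|-N}\prod_{k=2}^n\Bigl(\tfrac{f_k}{f_1}\Bigr)^{j_k}.
\]
The structural observation is that, because $u\leq N\leq v$, every exponent of $f_1$ is non-negative in the first (series) expression and non-positive in the second (finite polynomial) expression. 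So each representation is free of one kind of pathology in $f_1$.

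To estimate $m(r,G/f_1^N)$ I would split the integration sphere $\{|z|=r\}$ into $E_-=\{|f_1|\leq 1\}$ and $E_+=\{|f_1|>1\}$, and on each piece use the representation whose $f_1$-exponents are harmless there. On $E_+$ the polynomial side is dominant: $|f_1|^{|J|-N}\leq 1$, so a triangle-inequality estimate over the finitely many multi-indices $J$ contributes at most $\sum_{|J|\leq u}\bigl(m(r,b_J)+\sum_{k\geq 2} j_k\,m(r,f_k/f_1)\bigr)+O(1)$, which is $S(r,f_1)$ since the $b_J$ are small functions of $f_1$ and $m(r,f_k/f_1)=S(r,f_1)$ by hypothesis. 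On $E_-$ the series side is dominant: $|f_1|^{|I|-N}\leq 1$, so the integrand is controlled by $\sum_{|I|\geq v}|a_I|\prod|f_k/f_1|^{i_k}$. I would then absorb the geometric factors $\prod|f_k/f_1|^{i_k}$ using the absolute convergence of $G$ as a power series on $\mathbb{C}^n$ (so $\sum_I|a_I|\,R^{|I|}$ converges for any $R$), combined with the hypotheses $m(r,\sum_I|a_I|)=S(r,f_1)$ and $m(r,f_k/f_1)=S(r,f_1)$; this yields an $S(r,f_1)$ bound on the $E_-$ contribution as well.

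Adding the two pieces gives $m(r,G(f_1,\dots,f_n)/f_1^N)=S(r,f_1)$, which is the claim.

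The main obstacle is the $E_-$ estimate: one must convert a pointwise bound involving an \emph{infinite} series into a proximity estimate without accumulating an unbounded factor depending on $|I|$. This is exactly where I expect the technique of Gross--Osgood \cite{GO96} and B.\,Q.~Li \cite{Li96} --- using the convergence of the full series $G$ on $\mathbb{C}^n$ together with the small-proximity hypothesis on $\sum_I|a_I|$ to dominate the $\prod(f_k/f_1)^{i_k}$ terms uniformly in $|I|$ --- to play the decisive role.
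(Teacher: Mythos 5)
Your split of the sphere into $E_+=\{|f_1|>1\}$ and $E_-=\{|f_1|\le 1\}$ and your treatment of $E_+$ via the polynomial side agree with the paper's Case~(1). The gap is in $E_-$, and it is exactly the obstacle you flag at the end without resolving. On the subset of $E_-$ where some $|f_j(z)|>1$, the series-side pointwise bound $\sum_{|I|\ge v}|a_I|\,|f_1|^{|I|-N}\prod_k|f_k/f_1|^{i_k}$ involves powers $|f_j/f_1|^{i_j}$ with $i_j$ running to infinity through the multi-indices; absolute convergence of $G$ at the point $(f_1(z),\dots,f_n(z))$ controls the pointwise sum, but the resulting dominant quantity is essentially $\sum_{|I|\ge v}|a_I|\bigl(\max_k|f_k/f_1|\bigr)^{|I|}$, whose proximity function is \emph{not} controlled by $m(r,\sum|a_I|)$ together with $m(r,f_k/f_1)$ alone, because the exponent of the large quantity $\max_k|f_k/f_1|$ is unbounded. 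In other words, ``absorbing the geometric factors using convergence'' is not a valid Nevanlinna estimate here, and this subregion of $E_-$ is precisely where the argument breaks.

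The paper closes this gap by subdividing $\{|f_1|<1\}$ further. When some $|f_j(z)|\ge 1$, it does \emph{not} use the series side: it multiplies $G/f_1^N$ by $f_j^N\ge 1$ and rewrites via $G=P$, giving $|P|\cdot|f_j/f_1|^N$, whose proximity is controlled because the exponent on $f_j/f_1$ is the \emph{fixed} number $N$, and $|f_1|^{|I|}\le 1$ kills the $|I|$-dependence in $P$. Only when \emph{all} $|f_j(z)|<1$ does it pass to the series, choosing $l$ with $|f_l|$ maximal and writing $|a_I||f_l|^{|I|}/|f_1|^N=|f_l/f_1|^N\,|a_I|\,|f_l|^{|I|-N}\le |f_l/f_1|^N\,|a_I|$, again using $|I|-N\ge v-N\ge 0$ and $|f_l|<1$ to suppress the $|I|$-dependence and leave the fixed exponent $N$. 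This three-way case split is the decisive idea that makes the bound uniform in $|I|$; your two-way split cannot reproduce it, so the proposal as written does not prove the lemma.
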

\begin{proof} We shall use an argument similar to the proof of Lemma 4.1 in \cite{Li96}.
Write $$P(f_1, f_2, \dots, f_n)=\sum_{|I|=0}^ub_I(z)f_1^{i_1}f_2^{i_2}\cdots f_n^{i_n}$$ where $b_I$ are small functions of $f_1$. Take any point $z\in\mathbb{C}^m$, we consider the following cases. 

\noindent\textbf{Case (1)} $|f_1(z)|\geq 1$. Then by the equality (\ref{eqn:FP}), 
\begin{eqnarray*} \left|\frac{F(f_1, \dots, f_n)}{f_1^N}(z)\right|
&=&\left|\frac{P(f_1, \dots, f_n)}{f_1^N}(z)\right|\\
&\leq& \sum_{|I|=0}^u\left|b_I(z)\frac{f_1^{i_1}f_2^{i_2}\cdots f_n^{i_n}}{f_1^{|I|}}\right|:=G_1(z).
\end{eqnarray*}

\noindent\textbf{Case (2)} $|f_1(z)|<1$. We divide it into two subcases.

\noindent\textbf{Case (2)(a)} There exists a $j, 1\leq j\leq n$, such that $|f_j(z)|\geq 1$. Then 
\begin{eqnarray*}\left|\frac{G(f_1, \dots, f_n)}{f_1^N}(z)\right|
&\leq& \left|\frac{G(f_1, \dots, f_n)}{f_1^N}(z)f_j^N(z)\right|\\
&=& |P(f_1, \dots, f_n)|\left|\frac{f_j}{f_1}(z)\right|^N\\
&\leq &\left(\sum_{|I|=0}^u\left|b_I(z)\frac{f_1^{i_1}f_2^{i_2}\cdots f_n^{i_n}}{f_1^{|I|}}f_1^{|I|}(z)\right|\right)\left|\frac{f_j}{f_1}(z)\right|^N\\
&\leq &\left(\sum_{|I|=0}^u\left|b_I(z)\frac{f_1^{i_1}f_2^{i_2}\cdots f_n^{i_n}}{f_1^{|I|}}\right|\right)\left(\sum_{j=1}^n\left|\frac{f_j}{f_1}(z)\right|^N\right):=G_2(z).
\end{eqnarray*}

\noindent\textbf{Case (2)(b)} For any $j,\ 1\leq j\leq n,\ |f_j(z)|<1$. Then there exists a $l,\ 1\leq l\leq n$, such that $|f_l(z)|=\displaystyle\max_{1\leq j\leq n}\{|f_j(z)|\}$. In view of the fact $N\leq v$, we have 
\begin{eqnarray*}\left|\frac{G(f_1, \dots, f_n)}{f_1^N}(z)\right|
&\leq& \sum_{|I|=v}^{\infty}\frac{1}{|f_1^N(z)|}|a_I(z)f_l^{|I|}(z)|\\
&=&\sum_{|I|=v}^{\infty}\left|\frac{f_l}{f_1}(z)\right|^N|a_I(z)f_l^{|I|-N}(z)|\\
&\leq &\sum_{|I|=v}^{\infty}\left|\frac{f_l}{f_1}(z)\right|^N|a_I(z)|\\
&\leq &\left(\sum_{l=1}^n\left|\frac{f_l}{f_1}(z)\right|^N\right)\left(\sum_{|I|=v}^{\infty}|a_I(z)|\right):=G_3(z).
\end{eqnarray*}

Combining the above estimations, we have $$\left|\frac{G(f_1, \dots, f_n)}{f_1^N}(z)\right|\leq G_1(z)+G_2(z)+G_3(z)$$ for any $z\in\mathbb{C}^m$. By the assumption that $m(r, f_i/f_1)=S(r, f_1)$ and $m\left(r, \displaystyle\sum_{|I|=v}^{\infty}|a_I|\right)=S(r, f_1)$, we deduce that  $$m\left(r, \frac{G(f_1, f_2, \dots, f_n)}{f_1^N}\right)\leq m(r, G_1+G_2+G_3)=S(r, f_1).$$ This completes the proof.
\end{proof}

The following lemma can be proved by some counting arguments on the number of solutions of certain system of linear equations (a technique often used in transcendental number theory).
 
\begin{lem}[\cite{GO96, Li96}]\label{lem:LB} Let $f_1$ be an entire function in $\mathbb{C}^m$ and $G(w_1, w_2, \dots, w_n)=\displaystyle\sum_{|I|=0}^{\infty}a_I(z)w^I$ be a nonzero series in $w_1, w_2, \dots, w_n$ with coefficients $a_I$ being entire functions in $\mathbb{C}^m$ satisfying that $m\left(r, \displaystyle\sum_{|I|=0}^{\infty}|a_I|\right)=S(r, f_1)$. Then for any integer $L > 0$, there exist three positive integers $p , q$ and $v$ with $p<v/L$, and two non-zero polynomials $P$ and $Q$ in $w_1, w_2, \dots, w_n$, where
\begin{equation}\label{eqn:P}
P(w_1, w_2, \dots, w_n)=\sum_{|I|=0}^pp_I(z)w^I
\end{equation}
and 
\begin{equation}\label{eqn:Q}
Q(w_1, w_2, \dots, w_n)=\sum_{|I|=0}^qq_I(z)w^I, 
\end{equation} such that 
\begin{equation}\label{eqn:QFP}
(QG+P)(w_1, w_2, \dots, w_n)=\sum_{|I|=v}^{\infty}b_I(z)w^I,
\end{equation} where $w=(w_1, w_2, \dots, w_n)$, $p_I$ and $q_I$ are polynomials of some $a_I$, and 
\begin{equation}\label{eqn:mb}
m\left(r, \sum_{|I|=v}^{\infty}|b_I|\right)=S(r, f_1).
\end{equation}
\end{lem}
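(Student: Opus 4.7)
The strategy is the classical Siegel-style pigeonhole argument, of the kind used by Gross--Osgood \cite{GO96} and B.\,Q.~Li \cite{Li96} in the reduction of analytic ODEs to algebraic ones. Set up $P$ and $Q$ with \emph{undetermined} coefficients $p_I$ $(|I|\le p)$ and $q_I$ $(|I|\le q)$, treated as elements of the field of meromorphic functions on $\mathbb{C}^m$. The requirement that every coefficient of $QG+P$ at total degree strictly less than $v$ vanish is a homogeneous linear system in the $p_I$'s and $q_I$'s, each matrix entry being a linear combination of the finitely many $a_J$'s with $|J|<v$.

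Each unknown $p_I$ with $|I|\le p$ appears in precisely one equation --- the vanishing of the degree-$I$ coefficient --- and with coefficient $1$. Consequently the $p_I$'s may be eliminated by the rule $p_I:=-[w^I](QG)$ for $|I|\le p$, leaving only the subsystem asking that the coefficients of $QG$ at degrees $p+1,\dots,v-1$ vanish. This reduced subsystem has $\binom{v-1+n}{n}-\binom{p+n}{n}$ equations in the $\binom{q+n}{n}$ unknowns $q_I$. Choose $v=Lp+1$ and $q=Lp$ for any positive integer $p$; then $p<v/L$, and the identity $\sum_{d=0}^{D}\binom{d+n-1}{n-1}=\binom{D+n}{n}$ shows the excess of unknowns over equations is exactly $\binom{p+n}{n}>0$. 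Hence the homogeneous system has a non-trivial solution over the field of meromorphic functions. Applying Cramer's rule to a square submatrix of maximal rank and scaling the whole solution vector by that determinant, we may insist that each $q_I$ is a polynomial in those finitely many $a_J$'s; the induced $p_I=-[w^I](QG)$ are then automatically polynomials in the $a_J$'s as well. Non-vanishing of $P$ and $Q$ can be arranged by choosing the non-trivial solution outside the proper subspace on which $P\equiv 0$, which is proper because $G\not\equiv 0$ (if necessary, slightly enlarging $q$ or shrinking $p$ to create the needed slack).

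The tail estimate \eqref{eqn:mb} is then a direct Nevanlinna computation. For $|I|\ge v>p$ only $QG$ contributes to $b_I$, so $b_I=\sum_{|J|\le q}q_J\,a_{I-J}$, whence
\[
\sum_{|I|\ge v}|b_I|\ \le\ \Bigl(\sum_{|J|\le q}|q_J|\Bigr)\Bigl(\sum_{|K|\ge 0}|a_K|\Bigr).
\]
The first factor has $m$-function $S(r,f_1)$ because each $q_J$ is a polynomial in the $a_I$'s and $m\bigl(r,\sum|a_I|\bigr)=S(r,f_1)$ by hypothesis; the second factor is $S(r,f_1)$ by the same hypothesis; combining these via $m(r,\varphi\psi)\le m(r,\varphi)+m(r,\psi)+O(1)$ and subadditivity of $m$ gives the desired bound. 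The principal technical obstacle is the bookkeeping in the middle paragraph --- simultaneously arranging $p<v/L$, an excess of unknowns over equations, and non-vanishing of both $P$ and $Q$ --- whereas the Nevanlinna estimate at the end is essentially automatic once the polynomial character of the $q_J$'s in the $a_I$'s has been secured by Cramer's rule.
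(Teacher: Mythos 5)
Your overall strategy (Siegel-style pigeonhole with elimination of the $p_I$, choice of $v,q$ to create an excess of unknowns, Cramer's rule for polynomiality, then a Nevanlinna estimate on the tail) is the right one, and steps one through five and the final Nevanlinna estimate are fine. The weak point is the non-vanishing of $P$. Once the $p_I$ are eliminated, $P=-[\,(QG)_{\le p}\,]$, and $P\equiv 0$ exactly when $Q$ lies in $\ker\Phi$, where $\Phi(Q):=(QG)_{<v}$. Now if $d$ denotes the order (lowest total degree of a nonzero term) of $G$, then $(QG)$ has order exactly $d+e$ where $e$ is the order of $Q$, so $\ker\Phi=\{Q:\deg\text{-}<(v-d)\text{ coefficients of }Q\text{ all vanish}\}$. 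In particular if you fix $p<d$ then $\ker\Phi$ coincides with the whole solution space $V$ of the reduced subsystem, and $P\equiv 0$ for \emph{every} admissible $Q$ — so the assertion that the bad locus is a ``proper subspace because $G\not\equiv 0$'' is simply not true as stated, and the suggested remedy of ``shrinking $p$'' points in the wrong direction.

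What actually rescues the argument is enlarging $p$. Since $\ker\Phi\subseteq V$, it suffices to force $\dim V>\dim\ker\Phi$. One has $\dim V\ge\binom{q+n}{n}-\binom{v-1+n}{n}+\binom{p+n}{n}$ while $\dim\ker\Phi=\binom{q+n}{n}-\binom{v-d-1+n}{n}$, so the desired strict inequality reduces to
\[
\binom{p+n}{n}+\binom{v-d-1+n}{n}>\binom{v-1+n}{n},
\]
i.e.\ $\binom{p+n}{n}>\sum_{k=0}^{d-1}\binom{v-1-k+n-1}{n-1}$. With your choices $v=Lp+1$, $q=Lp$ the right side is $O\bigl(d\,(Lp)^{n-1}\bigr)$ while the left side is of order $p^n$, so the inequality holds once $p$ is large enough relative to $d$, $n$ and $L$ (for instance $p\gg dnL^{n-1}$). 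Only with this quantified choice of $p$ is the ``bad'' subspace properly contained in $V$, which then gives a $Q\in V\setminus\ker\Phi$ and hence $P\not\equiv 0$, after which Cramer's rule and the Nevanlinna estimate go through exactly as you wrote. Since the paper does not reprove this lemma (it cites Gross--Osgood and Li), I cannot compare routes, but as written your treatment of the non-vanishing of $P$ is a genuine gap that must be repaired along these lines.
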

\begin{proof}[Proof of Theorem \ref{thm:L}]
Let $G(w_1, w_2, \dots, w_n)=\displaystyle\sum_{|I|=0}^{\infty}a_I(z)w^I.$  Also, let $L$ be any positive integer. By Lemma \ref{lem:LB}, we can find integers $p, q$ and $v$ with $p< v/L$, and two nonzero polynomials $P$ and $Q$ with the form (\ref{eqn:P}) and (\ref{eqn:Q}) such that $$(QG+P)(w_1, w_2, \dots, w_n)=\sum_{|I|=v}^{\infty}b_I(z)w^I$$ and the coefficients $b_I$ satisfy (\ref{eqn:mb}). If $G(f_1, f_2, \dots, f_n)\equiv 0$, by (\ref{eqn:QFP}), we have  
\begin{equation}\label{eqn:Pf}
P(f_1, f_2, \dots, f_n)=\sum_{|I|=v}^{\infty}b_I(z)f^I.
\end{equation}
If $P:=P(f_1, f_2, \dots, f_n)\not\equiv 0$, we shall prove that $\delta(0, f_1)=0$. 

First of all,  as $f_i$'s are entire functions, we claim that $T(r, P)\leq pT(r, f_1)+S(r, f_1).$

We first express $P$ as the following $$P=\sum_{k=0}^p\frac{P_k}{f_1^k}f_1^k$$  where $P_k$ is a homogeneous polynomial with degree $k$. As $m\left(r, f_k/f_1\right)=S(r, f_1)$, we have $$m\left(r, \frac{P_k}{f_1^k}\right)=S(r, f_1).$$  Using a theorem of A.Z. Mohon'ko (Theorem 2.25 of \cite{IIpo11}), we have
$$T(r, P)=m(r, P)+S(r, f_1)\leq pm(r, f_1)+S(r, f_1)=pT(r, f_1)+S(r, f_1).$$

Applying Lemma \ref{lem:ldl}  to (\ref{eqn:Pf}), one can conclude that 
\begin{eqnarray*} m(r, 1/f_1)
&\leq& (1/v)m(r, 1/f_1^v)\\
&\leq & (1/v)m\left(r, \frac{P}{f_1^v}\right)+(1/v)m\left(r, \frac{1}{P}\right)\\
&\leq& S(r, f_1)+(1/v)T(r, P)\\
&\leq& (p/v)T(r, f_1)+S(r, f_1)\leq \frac{1}{L}T(r, f_1)+S(r, f_1).
\end{eqnarray*}
Since $L$ can be taken arbitrarily large, we have  $\delta(0, f_1)=0$.  

This completes the proof.
\end{proof}

Now, we are in the position of the proof of Theorem \ref{thm:L2}.

\begin{proof}[Proof of Theorem \ref{thm:L2}] 
Consider a nonzero polynomial $$Q(w_1, \dots, w_n, F(w_1), \dots, F(w_n))$$ with $2n$ complex variables in $w_1, \dots, w_n, F(w_1), \dots, F(w_n)$ over $\mathbb{C}$ such that $$Q(f_1, \dots, f_n, F(f_1), \dots, F(f_n))\equiv 0.$$ 
Let $$G(w_1, \dots, w_n)=Q(w_1, \dots, w_n, F(w_1), \dots, F(w_n)),$$ one can verify that $G$ can be expanded into a nonzero power series with constant coefficients as $F$ is a transcendental entire function. Then from Theorem \ref{thm:L} and the assumptions of $f_1, \dots, f_n$,  there exists a nonzero polynomial $P(z_1, \dots, z_n)$ in $z_1, \dots, z_n$ over $\mathbb{C}$ such that $P(f_1, \dots, f_n)\equiv 0$. On the other hand, $f_1, \dots, f_n$ are assumed to be algebraically independent over $\mathbb{C}$. Hence the result follows.
\end{proof}

We will give examples to show that in Theorem \ref{thm:L2}, the conditions $$m(r, f_i/f_1)=S(r, f_1)\quad \mathrm{and}\quad \delta(0, f_1)>0$$ are necessary. Before presenting the examples, we will state some results we need. 

\begin{thm}[\cite{Ng01}]\label{thm:Ng}
Let $n\geq 1$ and $P(x, y)=\sum_{i=0}^na_i(x)y^i$ be a polynomial in $y$ with entire functions $a_i(x)$ as coefficients such that $a_n\not\equiv 0$. Suppose that $f$ and $g$ are transcendental entire functions such that $P(f, g)\equiv 0$ on $\mathbb{C}$. Then, there exists a transcendental entire function $h$ such that $f=f_1\circ h$ and $g=g_1\circ h$, where $f_1$ and $g_1$ are analytic on the image $\Im(h)$ of $h$.
\end{thm}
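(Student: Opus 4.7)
The plan is to read the identity $P(f,g)\equiv 0$ as saying that $\Phi(z):=(f(z),g(z))$ sends $\mathbb{C}$ into the one-dimensional analytic subvariety $V:=\{(x,y)\in\mathbb{C}^2:P(x,y)=0\}$ of $\mathbb{C}^2$ (which is one-dimensional because $a_n\not\equiv 0$), and then to parametrize the relevant branch of $V$ by a Riemann surface whose uniformization delivers the required $h$.

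First I would pass to the normalization $\nu:\tilde V\to V$, a disjoint union of smooth Riemann surfaces. Since $\mathbb{C}$ is simply connected, $\Phi$ lifts through $\nu$ to a holomorphic map $\tilde\Phi:\mathbb{C}\to\tilde V$, and by connectedness its image lies in a single component $R$. The two coordinate projections on $\mathbb{C}^2$ pull back via $\nu$ to holomorphic functions $F_1,G_1:R\to\mathbb{C}$ satisfying $F_1\circ\tilde\Phi=f$ and $G_1\circ\tilde\Phi=g$, so at this point one already has a factorization through the abstract Riemann surface $R$.

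Next I would uniformize $R$. Since $V$ is non-compact, so is $R$, and $\tilde\Phi$ is non-constant because $f$ is transcendental; lifting $\tilde\Phi$ to the universal cover of $R$ excludes the disk by Liouville and the sphere by non-compactness, so the universal cover is $\mathbb{C}$ and hence $R$ is biholomorphic either to $\mathbb{C}$ or to $\mathbb{C}^*$ (the torus is ruled out by non-compactness). Fixing a biholomorphism $\psi$ from $R$ onto $\mathbb{C}$ or $\mathbb{C}^*$, I set $h:=\psi\circ\tilde\Phi$, $f_1:=F_1\circ\psi^{-1}$, $g_1:=G_1\circ\psi^{-1}$. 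Then $h$ is entire, $f_1$ and $g_1$ are analytic on $\psi(R)\supseteq\Im(h)$, and by construction $f=f_1\circ h$ and $g=g_1\circ h$.

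The main obstacle will be verifying that $h$ is transcendental. In the case $R\cong\mathbb{C}^*$ this is automatic, since any non-constant nowhere-zero entire function is of the form $e^{\varphi}$ with $\varphi$ non-constant entire and is therefore transcendental. In the remaining case $R\cong\mathbb{C}$ the lift $\tilde\Phi$ could a priori be a polynomial, so one must exploit the transcendence of both $f$ and $g$, together with the explicit form of $F_1,G_1$ determined by the algebraic relation $P$, to exclude that possibility --- if necessary by modifying $\psi$ or composing with an auxiliary transcendental covering map to upgrade $h$. This compatibility between the algebraic geometry of the curve $V$ and the transcendence of $f,g$ is the delicate point of the argument and is where the detailed analysis of \cite{Ng01} should be invoked.
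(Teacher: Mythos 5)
Your normalization-and-uniformization framework is the right one, and your elimination of the disk (Liouville) and of the sphere and torus (non-compactness of $V$, hence of $R$) correctly reduces to $R\cong\mathbb{C}$ or $R\cong\mathbb{C}^*$; the $\mathbb{C}^*$ case is also handled correctly. But in the case $R\cong\mathbb{C}$ you leave a genuine gap, punting to ``modifying $\psi$ or composing with an auxiliary transcendental covering map.'' The resolution is in fact direct and does not need such a device. Since $P(x,y)=\sum_{i=0}^n a_i(x)y^i$ has degree $n$ in $y$ with $a_n\not\equiv 0$, the first-coordinate projection $\mathrm{pr}_1\colon V\to\mathbb{C}$ has at most $n$ points in its fiber over every $x$ with $a_n(x)\neq 0$ (a co-discrete set). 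After first ruling out that $\nu(R)$ is a vertical line $\{x=x_0\}$ (which would force $f\equiv x_0$, contradicting transcendence of $f$), the map $\nu|_R\colon R\to\nu(R)$ is finite-to-one, so $F_1=\mathrm{pr}_1\circ\nu|_R$, viewed via $\psi$ as an entire function on $\mathbb{C}$, has finite fibers over a co-discrete set of values. By the great Picard theorem a transcendental entire function assumes all but at most one value infinitely often, so $F_1$ must be a polynomial (and likewise $G_1$). Since $f=F_1\circ h$ is transcendental and $F_1$ is a polynomial, $h$ is transcendental, closing the argument.

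A smaller but real misreading: the coefficients $a_i(x)$ are entire functions, not polynomials, so $V$ is an \emph{analytic} curve, not an algebraic one; your references to ``the algebraic geometry of the curve $V$'' and ``the algebraic relation $P$'' are not accurate. The normalization and uniformization machinery applies equally well to one-dimensional analytic sets, so your construction survives, but one cannot invoke algebraic properties of $V$ (e.g.\ rationality of coordinate functions on a rational curve) to conclude --- which is exactly why the fiber-counting/Picard argument above is the right mechanism for forcing $F_1$ to be a polynomial when $R\cong\mathbb{C}$.
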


\begin{lem}\label{lem:QP} 
Let $f$ be a transcendental entire function. Let $P$ and $Q$ be polynomials such that $P-Q$ is non-constant, then $f+P$ and $f+Q$ are algebraically independent over $\mathbb{C}$.
\end{lem}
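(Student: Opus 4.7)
The plan is to argue by contradiction, combining the factorization result of Theorem~\ref{thm:Ng} with Picard's theorem. Assume $f+P$ and $f+Q$ are algebraically dependent over $\mathbb{C}$, so that $R(f+P,f+Q)\equiv 0$ for some nonzero polynomial $R(x,y)\in\mathbb{C}[x,y]$. Since $f$ is transcendental entire and $P,Q$ are polynomials, both $f+P$ and $f+Q$ are transcendental entire. Writing $R(x,y)=\sum_{i=0}^n a_i(x)y^i$ with $a_n(x)\not\equiv 0$, one must have $n\geq 1$: otherwise $R\in\mathbb{C}[x]$ and $R(f+P)\equiv 0$ would force $R\equiv 0$ by the transcendence of $f+P$. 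Hence Theorem~\ref{thm:Ng} applies to the relation $R(f+P,f+Q)=0$.

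Invoking Theorem~\ref{thm:Ng}, one obtains a transcendental entire function $h$ on $\mathbb{C}$ together with functions $f_1,g_1$ analytic on $\Im(h)$ such that $f+P=f_1\circ h$ and $f+Q=g_1\circ h$. Subtracting these two identities yields the crucial relation
\[
P-Q=(f_1-g_1)\circ h \qquad \text{on }\mathbb{C}.
\]
The remaining task is to show that this identity forces $P-Q$ to be constant.

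This last step exploits the transcendence of $h$: by Picard-type results, a transcendental entire function attains every complex value infinitely often with the possible exception of one value. Fix any $c\in\Im(h)$ with infinitely many preimages $z_1,z_2,\dots$, and set $\gamma=(f_1-g_1)(c)$. Then $(P-Q)(z_j)=\gamma$ for every $j$, so the polynomial $P-Q-\gamma$ has infinitely many zeros and must vanish identically, giving $P-Q=\gamma$. This contradicts the hypothesis that $P-Q$ is non-constant. The main (minor) obstacle is the verification of the hypothesis of Theorem~\ref{thm:Ng}, handled by the $n\geq 1$ observation above; an alternative route that avoids Theorem~\ref{thm:Ng} is to expand $R(x,y)=\sum_k b_k(y)(x-y)^k$, whereupon $\sum_k b_k(f+Q)(P-Q)^k\equiv 0$ shows that $f+Q$ is algebraic over $\mathbb{C}(z)$, hence a polynomial (entire algebraic functions have polynomial growth), again contradicting the transcendence of $f+Q$.
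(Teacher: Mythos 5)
Your main argument coincides with the paper's proof: invoke Theorem~\ref{thm:Ng} to produce a common right factor $h$, subtract to obtain $P-Q=(f_1-g_1)\circ h$, and then use that a transcendental entire $h$ takes some value many times to contradict the non-constancy of $P-Q$. The paper selects $\deg(P-Q)+1$ points with a common $h$-image rather than infinitely many, but this is the same idea; your explicit check that $R$ genuinely depends on $y$ (so Theorem~\ref{thm:Ng} applies) is a detail the paper leaves implicit and is worth having.

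Your parenthetical alternative is a genuinely different and more elementary route that bypasses Theorem~\ref{thm:Ng} entirely: expand $R(x,y)=\sum_k b_k(y)(x-y)^k$, substitute to get $\sum_k b_k(f+Q)(P-Q)^k\equiv 0$, conclude $f+Q$ is algebraic over $\mathbb{C}(z)$, and hence a polynomial. The one point to verify is that $\sum_k b_k(y)\,(P-Q)^k$ is not the zero element of $\mathbb{C}[z][y]$; this holds because $P-Q$, being a non-constant polynomial, is surjective onto $\mathbb{C}$, so if $R(w+y,y)$ vanished for all $w$ in its range it would vanish identically, forcing $R\equiv 0$. With that check, the finish can also be done with Lemma~\ref{thm:exg1}, which gives $T(r,f+Q)=O(\log r)$ and hence that $f+Q$ is rational (so polynomial), contradicting transcendence. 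This alternative trades the heavier factorization theorem for a purely algebraic manipulation plus a basic growth estimate, and arguably makes the role of the hypothesis ``$P-Q$ non-constant'' more transparent.
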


\begin{proof} 
Suppose $f+P$ and $f+Q$ satisfy a polynomial equation $R(x, y)=0$ over $\mathbb{C}$. 
By Theorem \ref{thm:Ng}, there exists a transcendental entire function $h$ such that $f+Q=f_1\circ h$ and $f+P=g_1\circ h$ where $f_1$ and $g_1$ are analytic in the image $\Im(h)$ of $h$. Hence we have $Q-P=(f_1-g_1)\circ h$. Since $Q-P$ is non-constant,
 without loss of generality, we may assume that the degree of $Q-P$ is $n$. Since $h$ is transcendental, one can choose $n+1$ distinct points $z_1, \dots, z_{n+1}$ such that $$h(z_1)=\cdots=h(z_{n+1})$$ and hence $$(Q-P)(z_1)=\cdots=(Q-P)(z_{n+1})=a$$ for some $a\in\mathbb{C}$, which is impossible as $Q-P$ is of degree $n$.
\end{proof}

\begin{example}\label{emp:1}
Let $f_1=e^z, f_2=e^{e^z}$, one can check that $f_1$ and $f_2$ are algebraically independent over $\mathbb{C}$ and $\delta(0, f_1)=1$, but $m(r, f_2/f_1)=T(r, f_2)\neq S(r, f_1)$. Let $E(z)=e^z$, then $E(f_1)=e^{e^z}=f_2$ and $E(f_2)=e^{e^{e^z}}$. Therefore, $$\mathrm{tr}.\deg_{\mathbb{C}}\mathbb{C}(f_1, f_2, E(f_1), E(f_2))=\mathrm{tr}.\deg_{\mathbb{C}}\mathbb{C}(f_1, f_2, f_2, E(f_2))=3\neq 2\times 2.$$ Hence the condition $m(r, f_i/f_1)=S(r, f_1)$ is needed.
\end{example}

\begin{example}\label{emp:2}
 Let $f_1=e^z+z, f_2=e^z+1$, then $f_1$ and  $f_2$ are algebraically independent over $\mathbb{C}$ by Lemma \ref{lem:QP} and $m(r, f_2/f_1)=S(r, f_1)$ by the Logarithmic Derivative Lemma, but $\delta(0, f_1)=0$. Let $F(z)=e^z$, then $F(f_1)=e^ze^{e^z}$ and $F(f_2)=ee^{e^z}$, hence $(f_2-1)F(f_2)-eF(f_1)=0$, that is, $$\mathrm{tr}.\deg_{\mathbb{C}}\mathbb{C}(f_1, f_2, F(f_1), F(f_2))\neq 4.$$ Therefore, the condition $\delta(0, f_1)>0$ is also needed. Indeed,  this example also shows that $\delta(0, f_1)>0$ cannot be replaced by $\delta(a, f_1)>0$ where $a$ is non-zero constant or small function of $f_1$.
\end{example}

\subsection{Proof of Theorem \ref{thm:prime}}
%%
%% A subsection is started by
%%

To prove Theorem \ref{thm:prime}, we need the following lemma of A. Z. Mohon'ko.
 
\begin{lem}[\cite{Mokh84}]\label{lem:Mokh}
Let $R\in\mathbb{C}[z][u, v]$ be an irreducible polynomial. Let $f$ be a meromorphic solution of the equation $R(z, f(q(z)), f(p(z)))=0$, where $p(z)$ is a polynomial in $z$ with degree $d_p$ and $q(z)$ is a polynomial with degree $d_q$. We write $m:= \deg_{f(q(z))} R$ and $n:= \deg_{f(p(z))} R$ for its degree in $f(q(z))$ and $f(p(z))$ respectively.  
Let $\tau=\dfrac{\log(m/n)}{\log (d_p/d_q)}$. 

If $\tau\geq 1$, then $$\lim_{r\rightarrow\infty}\frac{\log T(r, f)}{\tau\log\log r}=1.$$

If $\tau<1$, then $f$ is a rational function.
\end{lem}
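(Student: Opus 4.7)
The plan is to extract a functional inequality for $T(r, f)$ from the algebraic identity $R(z, f(q(z)), f(p(z))) \equiv 0$ and iterate it to pin down the growth of $f$. Set $u := f(q(z))$ and $v := f(p(z))$, so that $u, v$ satisfy the irreducible relation $R(z, u, v) = 0$ with coefficients in $\mathbb{C}[z]$. A bivariate Valiron--Mohon'ko comparison (obtained via a resolvent / norm argument on the algebraic curve $R = 0$, with the polynomial coefficients contributing only $O(\log r)$ to the characteristic) yields
$$m\,T(r, f(q(z))) = n\,T(r, f(p(z))) + O(\log r).$$
Combining with Polya's estimate $T(r, f\circ p) = (1+o(1))\,T(r^{d_p}, f)$ for composition with a polynomial (and its analog for $q$), and writing $\psi(s) := T(e^s, f)$ with $s = \log r$, this rewrites as the functional relation
$$m\,\psi(d_q s) = n\,\psi(d_p s) + O(s) + o(\psi(d_p s)).$$

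Next I would iterate. Assume without loss of generality $d_p > d_q$, set $\alpha := d_p/d_q > 1$, and rescale to obtain $\psi(\alpha s) = (m/n)\psi(s) + O(s)$ (the $o(\psi)$ term being absorbed once $f$ is known transcendental, so that $\psi(s)/s \to \infty$). Iterating $k$ times from a fixed $s_0$ produces
$$\psi(\alpha^k s_0) = (m/n)^k\,\psi(s_0) + O(s_0)\,(m/n)^{k-1}\sum_{j=0}^{k-1}(n\alpha/m)^j,$$
and the decisive ratio $n\alpha/m = (d_p/d_q)/(m/n)$ is $<1$, $=1$, or $>1$ precisely as $\tau > 1$, $\tau = 1$, or $\tau < 1$ (since $\tau = \log(m/n)/\log(d_p/d_q)$). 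When $\tau < 1$ the geometric sum is dominated by its last term, producing $\psi(\alpha^k s_0) = O(\alpha^k s_0)$, i.e.\ $T(R, f) = O(\log R)$ along a discrete sequence, and monotonicity of $T$ then forces $f$ to be rational. When $\tau \geq 1$ the error is swallowed by the main term (under transcendence of $f$), yielding $\psi(\alpha^k s_0) \sim (m/n)^k \psi(s_0)$; setting $R = e^{\alpha^k s_0}$ and taking double logs gives $\log T(R, f) = \tau \log\log R + O(1)$, which is the claimed limit. Monotonicity of $T$ again interpolates between the sequence $\alpha^k s_0$ and all large $r$.

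The main obstacle is the bivariate Valiron--Mohon'ko step: the single-function version (comparing $T(r, R(z,f))$ with $T(r, f)$) is textbook, but here $u$ and $v$ are independent meromorphic functions tied only by an algebraic relation over $\mathbb{C}[z]$, so one has to pass through the $n$ branches of $v$ over $\mathbb{C}(z)(u)$ (e.g.\ via the norm / resolvent, or equivalently Cartan-style estimates on the projections of the curve) to bound $T(r, v)$ in terms of $T(r, u)$, while controlling the polynomial-coefficient contribution as an $O(\log r)$ error. A secondary but routine obstacle is propagating the $o(\psi)$ and exceptional-set terms through the iteration; these are all absorbed once one knows $f$ is transcendental, which is consistent with the two-case split in the conclusion.
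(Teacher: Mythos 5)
The paper does not actually prove this lemma: its ``proof'' is the one-line citation ``See A.~Z.~Mokhon'ko \cite{Mokh84}, Theorem 1 and Remark 2,'' so there is no internal argument against which to compare your attempt. What you have written is a reasonable reconstruction of the structure one expects in Mokhon'ko's own proof---a bivariate Valiron--Mohon'ko comparison to obtain a functional recursion for $T(r,f)$, a Polya--Clunie composition estimate to turn it into a relation for $\psi(s) = T(e^s,f)$, and iteration to pin down the double-logarithmic order. You also correctly and candidly identify the central missing ingredient.

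That ingredient, however, is essentially the whole content of the cited theorem. The identity $m\,T(r,u) = n\,T(r,v) + O(\log r)$ for meromorphic $u,v$ tied only by an irreducible relation $R(z,u,v)=0$ is not a consequence of the usual one-variable Valiron--Mohon'ko theorem (which bounds $T(r,R(z,f))$ by $\deg R\cdot T(r,f)$); it requires either a careful resolvent/norm argument on the algebraic curve $\{R=0\}$, or a degree-count on a uniformizing parametrization, and in particular the irreducibility hypothesis is doing real work. Your sketch names the obstacle but does not overcome it, so as written the argument is a plan rather than a proof. Beyond that, two secondary points should be tightened: (i) the Polya estimate should read $T(r,f\circ p) = (1+o(1))T(\lambda r^{d_p},f)$ with a normalizing constant $\lambda$ depending on the leading coefficient of $p$; one then needs to observe that, for functions whose characteristic is $(\log r)^{\tau+o(1)}$, the constant $\lambda$ is harmless. (ii) Your claim that ``the error is swallowed by the main term, yielding $\psi(\alpha^k s_0)\sim(m/n)^k\psi(s_0)$'' fails at the borderline $\tau = 1$ (i.e.\ $m/n=\alpha$): the accumulated error there is $O(k\,\alpha^{k-1}s_0)$, which is \emph{not} $o(\alpha^k)$; nevertheless the conclusion survives because $\log\psi(\alpha^k s_0) = k\log\alpha + O(\log k)$ and only the leading term matters after dividing by $\log(\alpha^k s_0)$. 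Finally, the degenerate case $d_p = d_q$ (where $\tau$ is not well defined) should either be excluded or treated separately. None of these affect the final limit, but the missing bivariate Valiron--Mohon'ko step does mean the proposal is incomplete.
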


\begin{proof} See A. Z. Mokhon'ko \cite{Mokh84}, Theorem 1 and Remark 2.
\end{proof}

\begin{proof}[Proof of Theorem \ref{thm:prime}]

Suppose $\mathrm{tr}.\deg_{\mathbb{C}}\mathbb{C}(f_1, f_2, F(f_1), F(f_2))<3$, then we consider the following three cases.

\noindent\textbf{Case 1.}  $f_1$ and $f_2$ are two polynomials with distinct degrees.

 Note that $f_1, F(f_1), F(f_2)$ are algebraically dependent over $\mathbb{C}$.
Applying Lemma \ref{lem:Mokh} to $f(z)=F(z), q(z)=f_1(z)$ and $p(z)=f_2(z)$, we have either 
$$\lim_{r\rightarrow\infty}\frac{\log T(r, F)}{\log r}=\lim_{r\rightarrow\infty}\frac{\log T(r, f)}{\tau\log\log r}\frac{\tau\log\log r}{\log r}=0,$$ or $F$ is a rational function. However, $F$ is a transcendental entire function with positive order, thus the result follows.  

\noindent\textbf{Case 2.} $f_1$ is a polynomial and $f_2$ is a transcendental entire function.

 It is not hard to see that $T(r, f_1)=o(T(r, f_2))$ and $T(r, f_2)=o(T(r, F(f_2)))$ by Lemma \ref{lem:Cl}. Therefore, $f_1, f_2$ and $F(f_2)$ are algebraically independent over $\mathbb{C}$, which is impossible as  $\mathrm{tr}.\deg_{\mathbb{C}}\mathbb{C}(f_1, f_2, F(f_1), F(f_2))<3.$

\noindent\textbf{Case 3.} Both $f_1$ and $f_2$ are transcendental entire functions which are $\mathbb{C}$-linearly independent modulo $\mathbb{C}$ and $f_1$ is prime. 

Note that $f_1, f_2$ and $F(f_1)$ are algebraically dependent over $\mathbb{C}$, and there exists a nonzero polynomial $P(z_1, z_2, z_3)$ such that 
$$P(f_1, F(f_1), f_2)=\sum_{i=0}^na_i(f_1, F(f_1))f_2^i\equiv 0$$ 
where $a_i(f_1, F(f_1))$ is polynomial in $f_1, F(f_1)$ and $a_n\not\equiv0$. 
By Theorem \ref{thm:Ng}, there exists a transcendental entire function $h$ such that $f_1=g_1\circ h$ and $f_2=g_2\circ h$, where $g_1$ and $g_2$ are analytic on $\Im(h)$. Since $f_1$ is prime, we have $g_1$ is linear. If $g_2$ is also linear, then $f_1$ and $f_2$ are not $\mathbb{C}$-linear independent modulo $\mathbb{C}$ which contradicts to the assumption. 
Hence $g_2$ is either a polynomial of degree $\geq 2$ or a transcendental entire function. 

If $g_2$ is a transcendental entire function. By Lemma \ref{lem:Cl}, it is not hard to see that $T(r, f_1)=o(T(r, f_2))$ and $T(r, f_2)=o(T(r, F(f_2)))$, as $f_1=g_1\circ h$ and $f_2=g_2\circ h$.  Therefore, $f_1, f_2, F(f_2)$ are algebraically independent over $\mathbb{C}$, which is impossible as $\mathrm{tr}.\deg_{\mathbb{C}}\mathbb{C}(f_1, f_2, F(f_1), F(f_2))<3.$
Hence $g_2$ is a polynomial of degree $\geq 2$.

Note that $f_1, F(f_1), F(f_2)$ are also algebraically dependent over $\mathbb{C}$ and hence there exists a nonzero irreducible polynomial $Q(z_1, z_2, z_3)$ such that $Q(f_1, F(f_1), F(f_2))\equiv 0$. This implies that $$Q(g_1, F(g_1), F(g_2))\circ h\equiv 0.$$ Hence $Q(g_1, F(g_1), F(g_2))\equiv 0$. Notice that $T(r, g_1)=S(r, F(g_1))$ as $F$ is a transcendental entire function. 

 Since $F$ is a transcendental entire function, from Lemma \ref{lem:Mokh}, we have $$\lim_{r\rightarrow\infty}\frac{\log T(r, F)}{\log r}=0,$$ which contradicts with the assumption that the order $\rho(F)>0$ and therefore the result follows.  
\end{proof}

\begin{example}Applying Theorem \ref{thm:prime} to Example \ref{emp:2}, we have $$\mathrm{tr}.\deg_{\mathbb{C}} \mathbb{C}(f_1, f_2, F(f_1), F(f_2))\geq 3.$$ Combining with Example \ref{emp:2}, we have $\mathrm{tr}.\deg_{\mathbb{C}} \mathbb{C}(f_1, f_2, F(f_1), F(f_2))= 3.$
\end{example}

Now, we will give some examples to illustrate the optimality of Theorem \ref{thm:prime}.

In Theorem \ref{thm:prime}, the condition that $f_1$ and $f_2$ are polynomials with \emph{distinct degrees} is necessary. For example,

\begin{example} Let $f_1=z^2$ and $f_2=(z+2\pi)^2$ which are $\mathbb{C}$-linear independent modulo $\mathbb{C}$. Let $F(z)=\cos\sqrt{z}$, then $\cos \sqrt{z}\circ f_1=\cos \sqrt{z}\circ f_2$ where 
$\rho(\cos \sqrt{z})=\frac{1}{2} >0$.
\end{example}

 In Theorem \ref{thm:prime}, the condition of \emph{$\mathbb{C}$-linear independence modulo $\mathbb{C}$} of $f_1$ and $f_2$ cannot be replaced by either \emph{$\mathbb{Q}$-linear independence modulo $\mathbb{C}$} or simply \emph{$\mathbb{C}$-linear independence.}

Let $f$ be a transcendental entire function. Then the first example is as follows.

\begin{example}\label{emp:3}
Let $f_1=\sqrt{-1}f$ and $f_2=f$ which are $\mathbb{Q}$-linearly independent modulo $\mathbb{C}$, consider $F(z)=\cos z^2$, then $F(f_1)=F(f_2)$ and hence $$\mathrm{tr}.\deg_{\mathbb{C}}\mathbb{C}(\sqrt{-1}f, f, \cos f^2, \cos f^2)=2.$$
\end{example}

The second one is to illustrate that \emph{$\mathbb{C}$-linear independence modulo $\mathbb{C}$} cannot be replaced by \emph{$\mathbb{C}$-linear independence.}
\begin{example}\label{emp:4}
 let $f_1=f$ and $f_2=f+c$, where $c$ is a nonzero complex number. Then $f_1$ and $f_2$ are $\mathbb{C}$-linear independent but  not $\mathbb{C}$-linear independent modulo $\mathbb{C}$. Let $F$ be a transcendental entire function with period $c$, then 
 $$\mathrm{tr}.\deg_{\mathbb{C}} \mathbb{C}(f_1, f_2, F(f_1), F(f_2)) =\mathrm{tr}.\deg_{\mathbb{C}} \mathbb{C}(f, f+c, F(f), F(f))=2.$$
 \end{example}
 
Finally, we will show that the \emph{primeness} of $f_1$ is also necessary. 

\begin{example}\label{exm:5} Let $f_1=\sin z$ and $f_2=\cos z$, one can check that both are not prime and they are $\mathbb{C}$-linear independent modulo $\mathbb{C}$. Let $F(z)=\cos (2\pi z^2)$. Then $$F(\sin z)=F(\cos z),$$ and hence we have  
\begin{eqnarray*}
\mathrm{tr}.\deg_{\mathbb{C}} \mathbb{C}(f_1, f_2, F(f_1), F(f_2)) =2.
\end{eqnarray*}
\end{example}
%Actually, $\lambda(F)>0$ is also necessary. For example, let $F$ be a polynomial, then it is easy to check that the transcendence degree among $f_1, f_2, F(f_1), F(f_2)$ is not equal to 4.

\section{Links to Number Theory and Geometry}\label{sec:num}
In this section, some links to transcendental number theory and geometric interpretation for Ax-Schanuel Theorem will be discussed.
\subsection{Lindemann-Weierstrass Theorem via Nevanlinna Theory}
Let $\alpha$ be a complex number, we say that $\alpha$ is \emph{algebraic} if and only if there exists non-zero polynomial $P(X)\in\mathbb{Q}[X]$ such that $P(\alpha)=0$, otherwise, $\alpha$ is called \emph{transcendental}.

\begin{definition} An analytic function $$f(z)=\sum_{n=0}^{\infty}c_n\frac{z^n}{n!}$$ is said to be an \emph{$E$-function} if:
\begin{enumerate}[1)]
\item all of the $c_n$ lie in an algebraic number field $k$ of finite degree;
\item for any $\epsilon>0$ one has $$|\overline{c_n}|=O(n^{\epsilon n})\quad as \quad n\rightarrow\infty,$$ where $|\overline{a}|$ denotes the maximum modulus of the conjugates of $a$.
\item for any $\epsilon>0$ there exists a sequence of natural numbers $q_1, q_2, \dots,$ with $q_n=O(n^{\epsilon n}),$ such that for all $n$ $$q_nc_j\in\mathbb{Z}_k\quad \mbox{for}\quad 0\leq j\leq n.$$
\end{enumerate}
\end{definition} 

Examples of $E$-functions contain all polynomials with algebraic  coefficients, as well as $e^z, \sin z$ and $\cos z$.

In 1956, Shidlovskii gave a theorem which connected the transcendental number theory and complex function theory as follows (see Chapter 4, \S4 of \cite{Shi89a}).
\begin{thm}[Siegel-Shidlovskii \cite{Shi89a}]\label{thm:Shi} Suppose that the $E$-functions $$f_1(z),\dots, f_n(z), \quad n\geq 1,$$ form a solution of the system of $n$ linear differential equations 
\begin{equation}\label{eqn:LD}
y'_k=Q_{k0}(z)+\sum_{i=1}^nQ_{ki}(z)y_i, \quad k=1, 2, \dots, n, 
\end{equation}
where $Q_{ki}(z)\in\mathbb{C}(z)$. If $\alpha$ is an algebraic number not equal to 0 or a pole of any of the $Q_{ki}(z)$, then $$\mathrm{tr}.\deg_{\mathbb{Q}}\mathbb{Q}(f_1(\alpha), \dots, f_n(\alpha))=\mathrm{tr}.\deg_{\mathbb{C}(z)}\mathbb{C}(z, f_1(z), \dots, f_n(z)).$$
\end{thm}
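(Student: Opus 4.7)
The inequality $\le$ is immediate: any algebraic relation $P(z,f_1(z),\ldots,f_n(z))\equiv 0$ over $\mathbb{C}(z)$ specializes at $z=\alpha$ to an algebraic relation over $\overline{\mathbb{Q}}$ among $f_1(\alpha),\ldots,f_n(\alpha)$, since $\alpha$ is not a pole of any of the $Q_{ki}$ and hence not a pole of any coefficient appearing in such a relation (after clearing denominators). The content of the theorem is therefore the reverse inequality. Let $t$ denote the transcendence degree on the right-hand side; after relabeling, I may assume $f_1,\ldots,f_t$ are algebraically independent over $\mathbb{C}(z)$, and the goal is to show that $f_1(\alpha),\ldots,f_t(\alpha)$ are algebraically independent over $\overline{\mathbb{Q}}$. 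I would argue by contradiction: suppose there exists a nonzero $P\in\overline{\mathbb{Q}}[Y_1,\ldots,Y_t]$ of degree $N$ with $P(f_1(\alpha),\ldots,f_t(\alpha))=0$.

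The first step is Siegel's auxiliary construction. Using Dirichlet's box principle (Siegel's Lemma) applied to a dimension count in the polynomial ring $\overline{\mathbb{Q}}[z,Y_1,\ldots,Y_n]$, one produces, for each large parameter $N$, a family of polynomials $P^{(1)},\ldots,P^{(M)}$ of bounded degree in $Y$ and controlled degree and height in $z$, such that the remainders $R^{(k)}(z):=P^{(k)}(z,f_1(z),\ldots,f_n(z))$ have zeros of order at least $\Omega(N^{n+1})$ at $z=0$. The degrees of freedom in the coefficients outnumber the vanishing conditions, so such a family exists with polynomial height bounds in $N$.

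Next, I would exploit the system of linear differential equations \eqref{eqn:LD}: each derivative $f_i'$ lies in $\mathbb{C}(z)[f_1,\ldots,f_n]$, so differentiating $R^{(k)}$ produces a new polynomial expression of bounded degree, yielding a large collection of linear forms in the monomials $\{f_1^{a_1}\cdots f_n^{a_n}:|a|\le N\}$ with coefficients in $\overline{\mathbb{Q}}(z)$. The crucial ingredient here is Shidlovskii's rank lemma: from this enlarged collection one can extract a subfamily whose $\overline{\mathbb{Q}}(z)$-rank matches the dimension $D$ of the monomial space, and whose common order of vanishing at $z=0$ remains large (although smaller than the initial order by a factor polynomial in $N$). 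This is exactly the step that upgrades Siegel's linear-independence method to handle full algebraic relations.

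The final step is an arithmetic contradiction. Combining the presumed algebraic relation $P(f_1(\alpha),\ldots,f_t(\alpha))=0$ with the Shidlovskii basis of forms produces a nonzero determinant $\Delta(z)\in\overline{\mathbb{Q}}(z)$ which evaluates to a specific algebraic number at $\alpha$. On the analytic side, Cauchy's estimates together with the $E$-function growth condition $|\overline{c_n}|=O(n^{\epsilon n})$ and the denominator condition $q_n=O(n^{\epsilon n})$ force $|\Delta(\alpha)|$ to be exceedingly small as $N\to\infty$. On the arithmetic side, the Liouville inequality applied to the controlled height of $\Delta(\alpha)$ forces $|\Delta(\alpha)|$ to either vanish (which is excluded by the rank lemma) or be bounded below by an explicit function of $N$. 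Choosing $N$ large enough engineers a contradiction. I expect the genuinely hard part to be the Shidlovskii rank lemma — ensuring that repeated differentiation via \eqref{eqn:LD} of Siegel's auxiliary forms produces a family of full functional rank without degrading the order of vanishing beyond repair — because it is exactly this step, absent in Siegel's original approach, which allows the passage from linear independence of values to algebraic independence.
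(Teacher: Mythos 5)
The paper does not prove this statement at all: it is quoted verbatim from Shidlovskii's monograph \cite{Shi89a} (Chapter 4, \S 4) and used as a black box to deduce the Lindemann--Weierstrass theorem from Theorem \ref{thm:ex2}. So there is no ``paper proof'' to compare against; what you have written is a compressed outline of the classical Siegel--Shidlovskii argument from the transcendence-theory literature.

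As an outline, the three pillars you name --- Siegel's auxiliary construction via the box principle, Shidlovskii's functional rank lemma, and the determinant/Cauchy/Liouville contradiction at $z=\alpha$ --- are the correct skeleton, and you correctly identify the rank lemma as the genuinely hard and genuinely new step beyond Siegel. But two things in your write-up are not actually established. First, the ``easy'' direction $\mathrm{tr.deg}_{\mathbb{Q}}\mathbb{Q}(f_1(\alpha),\ldots,f_n(\alpha))\le\mathrm{tr.deg}_{\mathbb{C}(z)}\mathbb{C}(z,f_1,\ldots,f_n)$ is not merely specialization at $z=\alpha$: a relation over $\mathbb{C}(z)$ specializes to a relation with \emph{complex} coefficients, not algebraic ones, so one needs a descent lemma asserting that $E$-functions satisfying a system such as \eqref{eqn:LD} which are $\overline{\mathbb{Q}}(z)$-algebraically dependent iff $\mathbb{C}(z)$-algebraically dependent; you never supply this. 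Second, and more seriously, the rank lemma and the precise bookkeeping of degrees, heights, and orders of vanishing under the passage from $R$ to $R',R'',\ldots$ via \eqref{eqn:LD} is precisely where the proof either works or does not, and you explicitly defer it. A sketch that names the hard step but does not carry it out is not a proof; for this theorem the honest course, and the one the paper itself takes, is to cite Shidlovskii.
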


Applying Theorem \ref{thm:ex2} to $\psi_i=\alpha_i, i=1, \dots, n$ which are algebraic numbers and $g=z$, one has $z, e^{\alpha_1z}, \dots, e^{\alpha_nz}$ are algebraically independent over $\mathbb{C}$. Thus by using Theorem \ref{thm:Shi} with $\alpha=1$, we can also obtain the Lindemann-Weierstrass Theorem. 

\begin{thm}[Lindemann-Weierstrass]
 Let $\alpha_1, \alpha_2, \dots, \alpha_n$ be non-zero algebraic numbers and linearly independent over $\mathbb{Q}$. Then $e^{\alpha_1}, e^{\alpha_2}, \dots, e^{\alpha_n}$ are algebraically independent over $\mathbb{Q}$.
\end{thm}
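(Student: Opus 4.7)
The plan is to combine Theorem \ref{thm:ex2} with the Siegel--Shidlovskii transfer principle (Theorem \ref{thm:Shi}), as hinted by the remark preceding the statement. The first step is to construct, from the abstract algebraic numbers $\alpha_1,\dots,\alpha_n$, a family of analytic functions to which Theorem \ref{thm:ex2} applies. I would set $m=1$, take $g(z)=z$, and set $\psi_i=\alpha_i$ viewed as constant meromorphic functions on $\mathbb{C}$. The hypothesis that $\alpha_1,\dots,\alpha_n$ are $\mathbb{Q}$-linearly independent is given; since each $\psi_i$ is a non-zero constant, $T(r,\psi_i)=O(1)=S(r,z)=S(r,g)$ and $\psi_ig=\alpha_iz$ is entire. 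For the algebraic-independence condition (3) of Theorem \ref{thm:ex2}, I would just take the singleton $\{\alpha_1 z\}$, which is transcendental over $\mathbb{C}$. Theorem \ref{thm:ex2} then yields that
\[
\alpha_1 z,\ e^{\alpha_1 z},\ e^{\alpha_2 z},\ \dots,\ e^{\alpha_n z}
\]
are algebraically independent over $\mathbb{C}$.

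The second step passes from algebraic independence over $\mathbb{C}$ to transcendence degree over $\mathbb{C}(z)$. Since $\alpha_1\neq 0$, the field $\mathbb{C}(\alpha_1 z)$ equals $\mathbb{C}(z)$, so the $(n+1)$-fold algebraic independence over $\mathbb{C}$ immediately translates into
\[
\mathrm{tr.deg}_{\mathbb{C}(z)}\,\mathbb{C}\bigl(z,e^{\alpha_1 z},\dots,e^{\alpha_n z}\bigr)=n.
\]
I would then check that the tuple $(e^{\alpha_1 z},\dots,e^{\alpha_n z})$ satisfies a linear differential system of the form (\ref{eqn:LD}): each $f_i=e^{\alpha_i z}$ satisfies $f_i'=\alpha_i f_i$, so we may take $Q_{ki}(z)=\alpha_k\delta_{ki}$ and $Q_{k0}(z)=0$, all of which lie in $\mathbb{C}(z)$ with no poles anywhere. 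Moreover, each $e^{\alpha_i z}=\sum_{n\geq 0}(\alpha_i^n)\,z^n/n!$ is an $E$-function because $\alpha_i$ algebraic makes every coefficient lie in the fixed number field $\mathbb{Q}(\alpha_1,\dots,\alpha_n)$, while the growth and denominator conditions are trivial since $|\overline{\alpha_i^n}|\leq |\overline{\alpha_i}|^n$ is merely exponential in $n$ (hence $O(n^{\varepsilon n})$ for any $\varepsilon>0$) and a single integer suffices to clear denominators of all $\alpha_i^j$ with $j\leq n$.

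Finally, I would apply Theorem \ref{thm:Shi} (Siegel--Shidlovskii) at the algebraic point $\alpha=1$, which is non-zero and not a pole of any $Q_{ki}$, to conclude
\[
\mathrm{tr.deg}_{\mathbb{Q}}\,\mathbb{Q}\bigl(e^{\alpha_1},\dots,e^{\alpha_n}\bigr)
=\mathrm{tr.deg}_{\mathbb{C}(z)}\,\mathbb{C}\bigl(z,e^{\alpha_1 z},\dots,e^{\alpha_n z}\bigr)=n,
\]
which is exactly the algebraic independence of $e^{\alpha_1},\dots,e^{\alpha_n}$ over $\mathbb{Q}$. Since every heavy lifting is already packaged in Theorem \ref{thm:ex2} and Theorem \ref{thm:Shi}, there is no real obstacle; the only subtle point is the bookkeeping that turns algebraic independence over $\mathbb{C}$ into the transcendence degree over $\mathbb{C}(z)$ required by Siegel--Shidlovskii, and that is handled by the observation $\mathbb{C}(\alpha_1 z)=\mathbb{C}(z)$. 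The role of the hypothesis $\alpha_i\neq 0$ is visible at exactly that point (if $\alpha_1=0$, we could not absorb $\alpha_1 z$ into $\mathbb{C}(z)$ and, more importantly, $e^{\alpha_i}=1$ would directly contradict the conclusion).
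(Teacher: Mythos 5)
Your proof is correct and follows the same route the paper takes: apply Theorem \ref{thm:ex2} with $\psi_i=\alpha_i$ and $g(z)=z$ to get algebraic independence of $z,e^{\alpha_1 z},\dots,e^{\alpha_n z}$ over $\mathbb{C}$, then invoke Siegel--Shidlovskii (Theorem \ref{thm:Shi}) at $\alpha=1$. You have merely made explicit the bookkeeping the paper leaves implicit (verifying the $E$-function and ODE hypotheses, and the passage from algebraic independence over $\mathbb{C}$ to transcendence degree $n$ over $\mathbb{C}(z)$ via $\mathbb{C}(\alpha_1 z)=\mathbb{C}(z)$).
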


\subsection{Counter Example to a Geometric Ax-Schanuel Theorem}\label{sec:geo}
We introduce a geometric interpretation of the Ax-Schanuel Theorem following \cite{Tsim15}.

Let $e(x)=e^{2\pi ix}$. Define a holomorphic, non-algebraic map $$\pi_e:\mathbb{C}^n\rightarrow (\mathbb{C}^*)^n, \ \pi_e(z_1, \dots, z_n)=(e(z_1), \dots, e(z_n))$$ where $\mathbb{C}^*=\mathbb{C}\setminus\{0\}$. Let $D_n$ be the graph of $\pi_e$ given by $$D_n=\{(x_1, \dots, x_n, y_1, \dots, y_n)\in\mathbb{C}^n\times(\mathbb{C}^*)^n: \pi_e(x_1, \dots, x_n)=(y_1, \dots, y_n)\}.$$

Denote by $\pi_a$ the projections from $\mathbb{C}^n\times(\mathbb{C}^*)^n$ onto $\mathbb{C}^n$, then the Ax-Schanuel Theorem can be rephrased geometrically as follows:

\begin{thm}[Geometric Ax-Schanuel \cite{Tsim15}]\label{thm:GAxS}
Let $U\subset D_n$ be an irreducible complex analytic subspace such that $\pi_a(U)$ does not lie in the translate of a proper $\mathbb{Q}$-linear subspace of $\mathbb{C}^n$. Then $$\dim_{\mathbb{C}}\mathrm{Zcl}(U)\geq n+ \dim_{\mathbb{C}}U$$ where $\mathrm{Zcl}(U)$ means the Zariski closure of $U$ in $\mathbb{C}^n\times(\mathbb{C}^*)^n$.
\end{thm}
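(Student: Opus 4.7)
The plan is to deduce Theorem \ref{thm:GAxS} directly from the formal-power-series Ax--Schanuel Theorem \ref{thm:Ax}. First I would pick a smooth point $p=(a_1,\dots,a_n,e(a_1),\dots,e(a_n))\in U$ and choose a local holomorphic chart $\varphi:\Delta^m\to U$ (where $m=\dim_{\mathbb{C}} U$ and $\Delta^m$ is a small polydisk centered at $0$) with $\varphi(0)=p$ and $\varphi$ a biholomorphism onto its image. Since $U\subset D_n$ and $\pi_a$ restricts to an isomorphism $D_n\xrightarrow{\sim}\mathbb{C}^n$, the chart produces $n$ convergent power series $f_1(t)-a_1,\dots,f_n(t)-a_n\in\mathbb{C}\{t_1,\dots,t_m\}\subset\mathbb{C}[[t_1,\dots,t_m]]$ so that locally
\[
U=\bigl\{\bigl(f_1(t),\dots,f_n(t),e(f_1(t)),\dots,e(f_n(t))\bigr):t\in\Delta^m\bigr\}.
\]
By construction $\mathrm{rank}\bigl(\partial f_i/\partial t_j\bigr)=\dim_{\mathbb{C}}\pi_a(\varphi(\Delta^m))=\dim_{\mathbb{C}} U$, the last equality because $\pi_a|_{D_n}$ is an isomorphism.

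Next I would translate the geometric hypothesis into the algebraic hypothesis of Theorem \ref{thm:Ax}. Suppose, for contradiction, that the shifted series $f_1,\dots,f_n$ were $\mathbb{Q}$-linearly dependent modulo $\mathbb{C}$: there exist $q_1,\dots,q_n\in\mathbb{Q}$ not all zero and $c\in\mathbb{C}$ with $\sum q_i f_i(t)\equiv c$. Then $\pi_a(\varphi(\Delta^m))$ lies in the affine hyperplane $\{\sum q_i x_i=c\}\subset\mathbb{C}^n$; by the identity principle, $\pi_a(U)$ lies in the same translate of a proper $\mathbb{Q}$-linear subspace of $\mathbb{C}^n$, contradicting the hypothesis. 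Hence the $f_i$ satisfy the $\mathbb{Q}$-linear independence assumption of Theorem \ref{thm:Ax}.

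Now I would invoke Theorem \ref{thm:Ax} to conclude
\[
\mathrm{tr}.\deg_{\mathbb{C}}\mathbb{C}(f_1,\dots,f_n,e(f_1),\dots,e(f_n))\geq n+\mathrm{rank}\bigl(\partial f_i/\partial t_j\bigr)=n+\dim_{\mathbb{C}} U.
\]
The final step is to identify the left-hand side with $\dim_{\mathbb{C}}\mathrm{Zcl}(U)$. If $P\in\mathbb{C}[x_1,\dots,x_n,y_1,\dots,y_n]$ vanishes on $U$, then the power series $P(f_1,\dots,f_n,e(f_1),\dots,e(f_n))$ is identically zero in $\mathbb{C}\{t\}$, and conversely any polynomial relation among the coordinate power series cuts out an algebraic subvariety of $\mathbb{C}^n\times(\mathbb{C}^*)^n$ containing $\varphi(\Delta^m)$, hence (by irreducibility of $U$) containing $U$. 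Thus the ideal $I(\mathrm{Zcl}(U))$ coincides with the kernel of the evaluation map $\mathbb{C}[x,y]\to\mathbb{C}[[t]]$, so $\dim_{\mathbb{C}}\mathrm{Zcl}(U)$ equals the transcendence degree on the left.

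The main obstacle I anticipate is the clean translation in the opposite direction of the $\mathbb{Q}$-linear independence condition — making rigorous that if $\pi_a(U)$ is not contained in any translate of a proper $\mathbb{Q}$-linear subspace, then the \emph{local} power series $f_i$ (not the global holomorphic data) satisfy the formal $\mathbb{Q}$-linear independence needed — together with the standard but subtle identification of $\dim\mathrm{Zcl}(U)$ with the transcendence degree at a smooth point via the coherence of the analytic and algebraic ideals. Once these two book-keeping points are handled, Theorem \ref{thm:Ax} supplies the inequality with nothing further to do.
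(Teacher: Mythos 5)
The paper does not give a proof of Theorem~\ref{thm:GAxS} at all: it is cited from Tsimerman~\cite{Tsim15}, and the text instead explains the \emph{converse} derivation, namely how the classical formal Ax--Schanuel Theorem~\ref{thm:Ax} follows from Theorem~\ref{thm:GAxS} together with Seidenberg's embedding theorem~\cite{Sei58}, by realizing the graph $U$ as the image of $\mathbf{f}(t)=(f_1,\dots,f_n,e(f_1),\dots,e(f_n))$. Your argument runs in the opposite direction, deducing the geometric statement from the formal one by local analytic parametrization at a smooth point of $U$, and it is essentially correct: $\pi_a\vert_{D_n}$ is a biholomorphism onto $\mathbb{C}^n$, so at a smooth point a chart $\varphi:\Delta^m\to U$ produces convergent (hence formal) series $f_1,\dots,f_n$ whose Jacobian has generic rank $m=\dim_{\mathbb{C}}U$; the hypothesis on $\pi_a(U)$ converts to formal $\mathbb{Q}$-linear independence modulo $\mathbb{C}$ via the identity principle on the irreducible $U$; and the identification $\dim_{\mathbb{C}}\mathrm{Zcl}(U)=\mathrm{tr}.\deg_{\mathbb{C}}\mathbb{C}(f_1,\dots,f_n,e(f_1),\dots,e(f_n))$ holds because the prime ideal $I(\mathrm{Zcl}(U))$ is exactly the kernel of the evaluation map $\mathbb{C}[x,y]\to\mathbb{C}\{t\}$, so the Krull dimension of the coordinate ring equals the transcendence degree of its fraction field. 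The two directions are the standard way one sees the formal and geometric statements as equivalent: the Seidenberg direction packages formal power series data into an analytic variety, while your direction extracts local power series from an analytic variety and is the more elementary of the two, using only the existence of smooth points on an irreducible analytic space and the identity theorem, with no embedding theorem required.
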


When $U$ is taken to be the image of the map $\textbf{f}: B\rightarrow D_n$ given by $$\textbf{f}(t_1, \dots, t_m)=(f_1, \dots, f_n, e(f_1), \dots, e(f_n)),$$ where $f_i$ are convergent power series in some open neighborhood $B\subset\mathbb{C}^m$, it is easy to verify that $U$ is a complex analytic space and $$\dim_{\mathbb{C}}U=\mathrm{rank}\left(\frac{\partial f_i}{\partial t_j}\right)_{1\leq j\leq m, 1\leq i\leq n}$$
as well as $$\dim_{\mathbb{C}}\mathrm{Zcl}(U)=\mathrm{tr}.\deg_{\mathbb{C}}\mathbb{C}(f_1, \dots, f_n, e(f_1), \dots, e(f_n)).$$
Applying Theorem \ref{thm:GAxS} and Seidenberg embedding theorem \cite{Sei58}, we have the classical Ax-Schanuel Theorem. 

The formulation given in Theorem \ref{thm:GAxS} actually is due to the dubbed Ax-Lindemann by Pila \cite{Pil15}. 

\begin{thm}[Ax-Lindemann]\label{thm:AL}
Let $V\subset(\mathbb{C}^*)^n$ be an algebraic subvariety. Then any maximal algebraic subvariety $W\subset \pi_e^{-1}(V)$ is geodesic,  where A subvariety $W$ of $\mathbb{C}^n$ is called \emph{geodesic} or \emph{weakly special}, if it is defined by any number $l\in\mathbb{N}$ of equations of the form
$$\sum_{i=1}^nq_{ij}z_j=c_i, \quad i=1, \dots, l,$$ where $q_{ij}\in\mathbb{Q}$ and $c_i\in\mathbb{C}$.
\end{thm}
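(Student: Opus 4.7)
My plan is to deduce Theorem~\ref{thm:AL} directly from the Geometric Ax-Schanuel theorem (Theorem~\ref{thm:GAxS}) via an induction on $n$. The key dictionary is that a maximal algebraic subvariety $W \subset \pi_e^{-1}(V)$ gets turned into an analytic subset of $D_n$ of the same dimension as $W$: set $U$ to be an irreducible component of $\{(w, \pi_e(w)) : w \in W_{\mathrm{reg}}\}$. Then $\pi_a(U) = W$ and $\dim_{\mathbb{C}} U = \dim_{\mathbb{C}} W$, and because $W \subset \pi_e^{-1}(V)$ we have $U \subset W \times V$, giving the ambient upper bound
$$\dim_{\mathbb{C}} \mathrm{Zcl}(U) \leq \dim_{\mathbb{C}} W + \dim_{\mathbb{C}} V.$$

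The generic case is where $\pi_a(U) = W$ is not contained in any translate of a proper $\mathbb{Q}$-linear subspace of $\mathbb{C}^n$. Theorem~\ref{thm:GAxS} then yields $\dim \mathrm{Zcl}(U) \geq n + \dim W$, and comparing with the upper bound above forces $\dim V \geq n$, so $V = (\mathbb{C}^*)^n$. Consequently $\pi_e^{-1}(V) = \mathbb{C}^n$ and by the maximality of $W$ one has $W = \mathbb{C}^n$, which is trivially geodesic.

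In the non-generic case, $W$ lies in some proper translate $L = c + L_0$ of a $\mathbb{Q}$-linear subspace $L_0 \subsetneq \mathbb{C}^n$. The image $T := \pi_e(L_0)$ is an algebraic subtorus of $(\mathbb{C}^*)^n$, and the restriction $\pi_e|_L : L \to \pi_e(c)\cdot T$ is, after the evident translations on each side, the uniformization of a lower-dimensional torus of the same shape. Setting $V' := V \cap (\pi_e(c)\cdot T)$, I would show that $W$ remains a maximal algebraic subvariety of $L \cap \pi_e^{-1}(V')$; after a $\mathbb{Q}$-linear change of coordinates identifying $(L, T)$ with $(\mathbb{C}^{\dim L_0}, (\mathbb{C}^*)^{\dim L_0})$, the induction hypothesis at dimension $\dim L_0 < n$ forces $W$ to be geodesic inside $L$, and hence geodesic inside $\mathbb{C}^n$.

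The main obstacle I expect is precisely this non-generic reduction. I must verify that the maximality of $W$ in the ambient $\pi_e^{-1}(V)$ transfers to maximality of $W$ inside the smaller preimage $L \cap \pi_e^{-1}(V')$ (otherwise the induction hypothesis could produce a strictly larger geodesic inside $L$ that a priori need not contradict the original maximality), and that the class of geodesic subvarieties is preserved under the $\mathbb{Q}$-rational change of coordinates used to identify $L$ with a standard $\mathbb{C}^{\dim L_0}$ (essentially because the defining equations $\sum q_{ij} z_j = c_i$ transform to the same type under $\mathbb{Q}$-linear substitutions). The generic step, by contrast, is immediate once one records $\mathrm{Zcl}(U) \subset W \times V$.
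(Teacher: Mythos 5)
Your argument in the generic case is exactly the paper's: take $U$ to be the graph of $\pi_e$ over $W$ (equivalently $(W\times V)\cap D_n$), note $\dim_{\mathbb C} U=\dim_{\mathbb C} W$ and $\mathrm{Zcl}(U)\subset W\times V$, invoke Theorem~\ref{thm:GAxS} to force $\dim_{\mathbb C} V\geq n$, hence $V=(\mathbb C^*)^n$ and, by maximality, $W=\mathbb C^n$. The paper stops there, presenting Theorem~\ref{thm:AL} as an easy corollary and leaving unaddressed exactly the non-generic case you isolate: when $W$ lies in a proper translate $L=c+L_0$ of a $\mathbb{Q}$-linear subspace, the hypothesis of Theorem~\ref{thm:GAxS} fails for $\pi_a(U)$ and the inequality is simply unavailable, so nothing has been proved. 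Your plan to restrict $\pi_e$ to $L$, identify $\pi_e(L_0)$ with an algebraic subtorus $T$ via a $\mathbb{Q}$-linear change of coordinates (choosing an integer basis for $L_0$ so the induced monomial map on tori is well-defined), and induct on $\dim L_0<n$ is the correct way to close this gap. The two verifications you flag as obstacles are in fact routine: any algebraic $W'\subset L\cap\pi_e^{-1}(V')$ containing $W$ is automatically contained in $\pi_e^{-1}(V)$, so the original maximality of $W$ forces $W'=W$; and the geodesic class is preserved under $\mathbb{Q}$-linear substitutions composed with complex translations because the defining equations carry $\mathbb{Q}$-coefficients on the variables and arbitrary $\mathbb{C}$-constants. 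In short, your proposal is correct and is strictly more complete than the paper's one-line sketch, which only treats the case where $W$ is not confined to a proper $\mathbb{Q}$-linear translate.
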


It is easy to see that the Ax-Lindemann Theorem could be viewed as a corollary of the geometric Ax-Schanuel Theorem. Indeed, plugging in $U=(W\times V)\cap D_n$ into Theorem \ref{thm:GAxS}, we see that $U$ has dimension at least as high as that of $W$. Then Theorem \ref{thm:GAxS} implies that $\dim_{\mathbb{C}}V\geq n$ and hence $V$ must be all of $(\mathbb{C}^*)^n$.
  
It is natural to ask if in Theorem \ref{thm:GAxS}, the holomorphic map $\pi_e$ can be replaced by the map $\pi_F: \mathbb{C}^n\rightarrow \mathbb{C}^n$ defined by $\pi_F(z_1, \dots, z_n)=(F(z_1), \dots, F(z_n))$ where $F$ is any transcendental entire function. Unfortunately, Example \ref{exm:5} gives a counterexample to this problem. In other words, the following statement in general does not hold. 

\emph{Let $D$ be the graph of $\pi_F$. Let  $U\subset D$ be an irreducible analytic subspace such that $\pi_a(U)$ does not lie in the translate of a proper $\mathbb{C}$-linear subspace of $\mathbb{C}^n$, where $\pi_a$ is the projection from $\mathbb{C}^n\times\mathbb{C}^n$ onto the first $\mathbb{C}^n$. Then $$ \dim_{\mathbb{C}}\mathrm{Zcl}(U)\geq n + \dim_{\mathbb{C}}U.$$}
\ \ \ \ This is because when $U$ is taken to be the image of the map $\textbf{f}: \mathbb{C}\rightarrow D\subset\mathbb{C}^2\times\mathbb{C}^2$ given by $$\textbf{f}(t)=(\sin t, \cos t,  \cos(2\pi\sin^2t), \cos(2\pi\cos^2t)),$$ then $\dim_{\mathbb{C}}U=1$ and $\dim_{\mathbb{C}}\mathrm{Zcl}(U)=2$ by Example \ref{exm:5}. Hence $$\dim_{\mathbb{C}}\mathrm{Zcl}(U)< 2+ \dim_{\mathbb{C}}U.$$

However, there do exist subsequent Ax-Schanuel type and Ax-Lindemann type results similar to Theorem \ref{thm:GAxS} and Theorem \ref{thm:AL} respectively for  the holomorphic, non-algebraic map $\pi:\Omega\rightarrow X$ where $\Omega$ and $X$ have complex algebraic structure. For example, Ax-Schanuel results are known for affine abelian group varieties in \cite{Ax72}, semi-abelian
varieties \cite{Kir09}, the $j$-function \cite{PT16}, more general Shimura varieties \cite{MPT19}, as well as variations of Hodge structures \cite{BT17}. Also, an Ax-Lindemann result for any Shimura variety has been proved in \cite{KUY16}. \\

\section*{Acknowledgement} The first author was supported by a studentship of HKU. The first and second authors were partially supported by the RGC grant 17301115.

  \bibliographystyle{abbrv}
  \bibliography{bibfile}
\end{document}